%
%
%
%
%
%
\documentclass[smallextended]{svjour3}       
\smartqed  
\usepackage{graphicx}
%
%
\usepackage{graphicx}
\graphicspath{{./img/}}	
\usepackage{xcolor}
\usepackage{lipsum}
\usepackage{amssymb}
\usepackage{amsmath}
\usepackage{mathrsfs}
\usepackage{hyperref}
\usepackage{mathtools}
\usepackage{dsfont}
\usepackage{url}
\usepackage{caption}
\usepackage{subcaption}
\usepackage{tcolorbox}
\usepackage{geometry}


\newcommand{\bm}{\boldsymbol}

\newcommand{\R}{\mathbb{R}}
\newcommand{\C}{\mathbb{C}}
\newcommand{\N}{\mathbb{N}}

\newcommand{\rev}[1]{#1}

\newtheorem{defn}{Definition}
\newtheorem{lem}{Lemma}
\newtheorem{ass}{Assumption}
\DeclarePairedDelimiter{\abs}{\lvert}{\rvert} 
\DeclarePairedDelimiter{\norm}{\lVert}{\rVert}
 
\newtheorem{cor}{Corollary}

\ifpdf
  \DeclareGraphicsExtensions{.eps,.pdf,.png,.jpg}
\else
  \DeclareGraphicsExtensions{.eps}
\fi

\usepackage{enumitem}
\setlist[enumerate]{leftmargin=.5in}
\setlist[itemize]{leftmargin=.5in}


%
%
%
\begin{document}

\title{Surrogate models for diffusion on graphs via sparse polynomials}
\author{Giuseppe Alessio D'Inverno\and Kylian Ajavon\and Simone Brugiapaglia}

\authorrunning{G.A. D'Inverno, S. Brugiapaglia, K. Ajavon} 

\institute{G. A. D'Inverno \at
              MathLab, International School for Advanced Studies \\
              Via Bonomea 265, Trieste, 34136, TS, Italy \\
              \email{gdinvern@sissa.it}           
           \and
           K Ajavon \& S. Brugiapaglia  \at
              Department of Mathematics and Statistics, Concordia University \\
              455 De Maisonneuve Blvd, Montréal, H3G 1M8, QC, Canada\\
              \email{kylian.yoan@gmail.com, simone.brugiapaglia@concordia.ca}
}
\date{Received: date / Accepted: date}

\maketitle

\begin{abstract}
Diffusion kernels over graphs have been widely utilized as effective tools in various applications due to their ability to accurately model the flow of information through nodes and edges. However, there is a notable gap in the literature regarding the development of surrogate models for diffusion processes on graphs. In this work, we fill this gap by proposing sparse polynomial-based surrogate models for parametric diffusion equations on graphs  with community structure. In tandem, we provide convergence guarantees for both least squares and compressed sensing-based approximations by showing the holomorphic regularity of parametric solutions to these diffusion equations. Our theoretical findings are accompanied by a series of numerical experiments conducted on both synthetic and real-world graphs that demonstrate the applicability of our methodology.
\keywords{diffusion on graphs \and sparse polynomial approximation \and least squares \and compressed sensing}
\subclass{34B45 \and 41A10 \and 41A63 \and 65D40}
\end{abstract}

\section{Introduction}
\noindent Over the last two decades, sparse polynomial approximation has emerged as one of the key tools to construct surrogate models, mainly motivated by applications to \emph{uncertainty quantification}; see, e.g., \rev{\cite{sparsepoly,cohen2015approximation,smith2024uncertainty,jakeman2017generalized}} and references therein. Given a high-dimensional parametric model, the idea is to approximate the parameter-to-solution map using a polynomial basis in the parametric space (\emph{cf.} the concept of \emph{polynomial chaos expansion} \cite{luthen2021sparse}). The main family of parametric models studied so far as parametric Partial Differential equations (PDEs), such as parametric elliptic equations or PDEs over parametrized domains; see, e.g., \cite{cohen2015approximation} and \cite[Chapter 4]{sparsepoly}. These PDE models are in fact known to lead to holomorphic parameter-to-solution maps and to fast (i.e., algebraic or exponential) polynomial approximation convergence rates \cite{cohen2011analytic}. This has been the key ingredient behind the success of these methods (especially those based on \emph{least squares} and \emph{compressed sensing}), which allows them to lessen the \emph{curse of dimensionality} \cite{sparsepoly}. The \emph{parametric diffusion equation} is one of the most well-studied parametric PDEs in the field. 

Yet, PDEs are not the only mathematical models employed to describe diffusion processes. A prominent alternative is provided by \emph{diffusion on graphs} \cite{kondor2002diffusion} (although there are well-known connections between the two areas \cite{calder2022lipschitz}\cite{coifman2006diffusion}\cite{garcia2020error}).  
Diffusion kernels have been proposed in their discrete variant over non-Euclidean manifolds such as graph manifolds \cite{belkin2003laplacian}\cite{coifman2005geometric}\cite{kondor2002diffusion}. The information flow (i.e.,  the derivative of the quantity of interest with respect to time) is modeled as a weighted exchange of information between each node and its neighbors, where the rule of exchange is determined by a specific kernel. In its simplest form, the information flow is modeled by a discrete heat equation, that involves the notion of \textit{graph Laplacian} \cite{smola2003kernels}. Diffusion on graphs is used as a modelling tool in a variety of applications \cite{dong2019learning}, such as marketing selection or influence prediction over social networks \cite{ma2008mining,rodriguez2014uncovering}, understanding information
flowing over a network of online media sources \cite{gomez2012inferring}, or predicting the spread of epidemics \cite{groendyke2011bayesian}. 
Diffusion kernels have recently gained interest by the graph deep learning community \cite{gasteiger2019diffusion}, leading to neural architectures in which the weights of the diffusion process are learnable \cite{chamberlain2021grand} or a certain quantity related to a specific diffusion kernel (e.g. gradient flows \cite{diunderstanding}) is minimized.

Despite the great success and popularity of diffusion on graphs, there seems to be a gap in the literature regarding the study of surrogate models for \emph{parametric} diffusion on graphs. This paper aims to bridge this gap. \rev{From a practical perspective, the proposed surrogate modelling strategy is most effective for problems where the diffusion coefficient varies with time, for which the full order model requires the use of a numerical time stepping methods, or when the underlying graph has large size. Our paper will study both scenarios.}

\subsection{Main contributions} 

\noindent The main contributions of our paper are in order:
\begin{enumerate}
\item We propose the construction of sparse polynomial-based surrogate models for parametric diffusion on graphs via \emph{least squares} and \emph{compressed sensing}.
\item We show that the parameter-to-solution map associated with parametric graph diffusion models is holomorphic (Theorem~\ref{thm:mainthm}). Leveraging this result,  we prove convergence theorems for least squares and compressed sensing based polynomial surrogate models (Corollaries~\ref{cor:LS} and ~\ref{cor:CS}). Corollaries~\ref{cor:LS} and \ref{cor:CS} show that it is possible to achieve quasi-optimal best $m$-term approximation rates using $m$ samples (up to log factors), hence showing that least squares and compressed sensing can lessen the curse of dimensionality for parametric graph diffusion.
\item We validate the proposed methodology by considering the specific case of diffusion on graphs with \emph{community structure} through extensive numerical experiments. First, we consider the case of synthetic graphs generated using the \emph{stochastic block model}. Then, we construct surrogate models for diffusion on a real-world graphs based on Twitter and Facebook data, respectively. \rev{In particular, we show that the use of surrogate models can offer speed-ups of several orders of magnitude with respect to the corresponding full order model while achieving good accuracy.}
\end{enumerate}

\subsection{Outline} 

\noindent  The paper is structured as follows. Section \ref{sec:surrogate_diffusion} begins by providing a brief overview on diffusion on graphs and main concepts of sparse polynomial approximation, and how those concepts are bridged in polynomial surrogate modeling for diffusion on graphs. In Section \ref{sec:main_results} we state our main results, i.e., theoretical guarantees for recovering an approximate solution to time-edge dependent diffusion equations on graph. Our theoretical findings are validated by several numerical experiments in Section \ref{sec:numerics}. Finally, in Section \ref{sec:conclusion}, we provide concluding remarks and outline potential avenues for future research.

\section{Surrogate diffusion on graphs}\label{sec:surrogate_diffusion}
\noindent In this section we introduce the main mathematical definitions and the key concepts needed to understand our theoretical findings, that will be presented and discussed in Section~\ref{sec:main_results}.

\subsection{Graph theory: basic concepts and notation}\label{subsec:graph_theory}
\noindent A \emph{graph} (or \emph{network}) $G = (V, E)$  is a mathematical object composed of a set of nodes (or vertices) $V$ and a set of edges $E$. An edge connecting two nodes $u,v \in V$ is denoted as a couple $(u,v) \in E$. The number of nodes is denoted as $|V|$. Here we focus on \emph{undirected graphs}, i.e., such that $(u,v) \in E$ if and only if $(v,u) \in E$. A graph can be represented by its adjacency matrix $\bm{A} \in \{0,1\}^{|V| \times |V|}$, where $A_{ij} = 1$ if $(i,j) \in E$,  $A_{ij} = 0$ otherwise. The \textit{degree of a node $i$} is defined as $\delta_i=\sum_{j=1}^{|V|} A_{ij}$, and the \textit{degree matrix} is the diagonal matrix $\bm{D} \in \mathbb{N}_0^{|V| \times |V|}$ defined by $D_{ii}=\delta_i$, where $\mathbb{N}_0 := \{0,1,2,3,\ldots\}$.  In this work we consider \emph{weighted} graphs, where the adjacency matrix is replaced by a weighted adjacency matrix $ \bm{W}  \in \mathbb{R}^{|V|\times |V|}$. Note that the unweighted adjacency matrix (and therefore the nodes' degree) can always be retrieved by a weighted adjacency matrix (namely, $A_{ij}=1$ if $W_{ij}\neq0$ and $A_{ij} =0$ otherwise). 
We consider networks where edge weights may change in time, i.e., $\bm{W}: [0, \infty ) \rightarrow \mathbb{R}^{|V|\times |V|}$. \rev{Throughout the paper we will consider only graphs satisfying this assumption.}

\begin{ass}
    $\bm{W}(t)$ is a continuous function of time.
\end{ass}

\noindent A central concept in this work is that of  \textit{community}. A community  $\mathcal{C}$ is a subset of nodes of a graph, i.e., $\mathcal{C}\subseteq V$, defined with respect to a measure of ``affinity''. For instance, in graphs generated by the Stochastic Block Model (SBM) \cite{holland1983stochastic}, this affinity is quantified by the number of connections of every node with the other ones in the community. Communities can also be defined through a community detection algorithm, as we will see in Section~\ref{sec:numerics}. Communities can induce a partition on the set of nodes, although it is not a strict requirement.

\subsection{Diffusion on graphs}

\noindent We start by defining diffusion processes on graphs. One could consider diffusion processes modeled by random walks, where a particle circulates randomly along the edges of a graph, or modeled by differential equations. In this work, we restrict our attention to the latter. The diffusion processes we are interested in can be considered as the discrete analogy to the diffusion of heat on a continuous domain. We want to be able to denote the quantity of a “substance” at each node $i \in V$ in a graph $G$, and how that quantity evolves as time progresses. We can denote this quantity by using a vector-valued function $ \bm{u} (t)$. For unweighted graphs, we obtain the following \emph{diffusion equation}:
\begin{equation}
\label{eq:diff}
\begin{cases}
\dot{\bm{u} }(t) = -c \bm{L} \bm{u} (t), & t \in [0,T]\\
\bm{u}(0) = \bm{u}_0 & \\
\end{cases},
\end{equation}
where $ \bm{L}  =  \bm{D} - \bm{A} $ is the (unweighted) graph Laplacian, $c \in \mathbb{C}$ is the (uniform) diffusion coefficient, $T>0$ is the final time and $\bm{u}_0$ is the initial condition. 
The solution, which can be derived in a few steps \cite{Newman2010}, is given by
$
 \bm{u} (t) = \sum_{i\in V} a_i(t)  \bm{v} _i,
$
where $\{\bm{v}_i\}_{i=1}^N$ are the eigenvectors of $\bm{L}$ and $\{a_i(t)\}_{i=1}^N$ are the (time-dependent) coefficients of the linear expansion in the basis $\{\bm{v}_i\}_{i=1}^N$.

For weighted graphs, we consider two variants of the diffusion equation. The first one is defined as
\begin{equation}
\label{eq:ed}
\begin{cases}
\dot{ \bm{u} }(t)= \bm{M}  \bm{u} (t), & t\in[0,T]
 \\
\bm{u}(0) = \bm{u}_0 & \\
\end{cases},
\end{equation}
where $ \bm{M} = \bm{C} \odot \bm{W} - \bm{D} ( \bm{C}, \bm{W} )$ (i.e., $ \bm{C }$ and $\bm{W} $ do not change in time), and $\bm{D}(\bm{C}, \bm{W})$ is a diagonal matrix defined as
\begin{equation} \label{eq:D}
D(\bm{C}, \bm{W})_{ij} = \delta_{ij} \sum_{k\in V} C_{ik} W_{ik}, \quad \forall i,j \in V,
\end{equation}
where $\bm{C} \in \mathbb{C}^{|V| \times |V|}$ is the diffusion coefficient matrix.  

Another variant allows for time dependence in $\bm{M}$, namely, 
\begin{equation}
\label{eq:ted}
\begin{cases}
\dot{ \bm{u} }(t)= \bm{M} (t) \bm{u} (t), & t\in[0,T] \\
\bm{u}(0) = \bm{u}_0 & \\
\end{cases},
\end{equation}
where 
\begin{equation}\label{eq:M}
    \bm{M} (t)= \bm{C} (t)\odot \bm{W} (t)- \bm{D} ( \bm{C} (t), \bm{W} (t)), \quad t \in [0,T],
    \end{equation}
with $\bm{C}(t)$ being the time-dependent diffusion coefficient matrix.

Problem \eqref{eq:ted} is well posed for any continuous $ \bm{M} (t)$, thanks to the Picard–Lindel\"of Theorem \cite[Theorem 6.I]{walter} since $ (t,\bm{u})  \mapsto  \bm{M} (t) \bm{u} $ is continuous and Lipschitz in $\bm{u}$ for any rectangular neighborhod of any arbitrary point $(\bar{t},  \bar{u})\in[0,T] \times \C^{|V|}$. In fact, $\| \bm{M} (t) \bm{u} -  \bm{M} (t) \bm{v} \|\leq \| \bm{M} (t)\|\cdot\| \bm{u}  -  \bm{v} \|$ and $\| \bm{M} (t)\|$ is uniformly bounded in $[0,T]$ since $ \bm{M} $ is continuous.

\subsection{Sparse polynomial approximation: basic concepts and methods}\label{subsec:sparse_pol_approx_concepts}

\noindent We proceed by introducing some basic notions of polynomial approximation in high dimensions, mostly following the presentation in \cite{sparsepoly}.  Consider a function $f:\mathcal{U}=[-1,1]^d\subseteq\R^d\to\C$, where $d>1$. Let $\varrho$ be a probability measure over $\mathcal{U}$ and $\{\psi_{\bm{\nu}} \}_{\bm{\nu}\in\mathbb{N}_0^d}$ a family of orthonormal polynomials over $L^2_{\varrho}(\mathcal{U})$, the space of square-summable functions on $\mathcal{U}$ with respect to the measure $\varrho$. In this setting, any function $f\in L_{\varrho}^2(\mathcal{U})$ has an $L^2_{\varrho}$-convergent orthonormal expansion
$
f = \sum_{ \bm{\nu} \in\N_0^d} c_{ \bm{\nu} } \psi_{ \bm{\nu} }.
$
The high-dimensional polynomial basis $\{ \psi_{\bm{\nu}} \}_{\bm{\nu} \in \N_0^d }$ is obtained as tensor product of such one-dimensional orthogonal polynomials $\{\phi_k\}_{k \in \mathbb{N}_0}$ over the interval $[-1,1]$, i.e., 
$\psi_{\bm{\nu}}(\bm{x}) = \prod_{k=1}^d \phi_{\nu_k}(x_k)$, for $\bm{x} = (x_1, \ldots, x_d)\in \mathcal{U}$. Furthermore, we use the \emph{multi-index} notation, i.e., instead of indexing a polynomial $\psi_i$ by $i\in\N$, we index it using a multi-index $ \bm{\nu} =(\nu_1,\nu_2,...,\nu_d)\in\N_0^d$.  
Typical examples of $\varrho$ and $\{\psi_{\bm{\nu}} \}_{\bm{\nu}\in\mathbb{N}_0^d}$ are the uniform measure and the \emph{multivariate Legendre polynomials}, respectively, defined by
\begin{equation}\label{eq:multi_Legendre}
    \psi_{\bm{\nu}}(\bm{y}) = \prod\limits_{k=1}^d \sqrt{2\nu_k+1}P_{\nu_k}(y_k), \quad \bm{y} \in [-1,1]^d, \quad \forall \bm{\nu}\in \N_0^d,
\end{equation}
where the $P_{\nu}$'s are the univariate Legendre polynomials defined, e.g., via \emph{Rodrigues's formula}: $P_{\nu}(y) = \frac{1}{2^\nu \nu!}\frac{d^\nu}{dy^\nu}(y^2-1)^\nu$, $y \in [-1,1]$, for all $\nu \in \N_0$.
Another popular choice for $\varrho$ and $\{\psi_{\bm{\nu}} \}_{\bm{\nu}\in\mathbb{N}_0^d}$ is given by the Chebyshev measure and Chebyshev polynomials, respectively; see \cite{sparsepoly} for further details.

Our goal is to find an $s$-sparse polynomial approximation to $f$. In other words, we want
$f\approx\sum_{ \bm{\nu} \in S}c_{ \bm{\nu} }\psi_{ \bm{\nu} }$, where $S\subseteq\N_0^d$ and $\abs{S}\leq s$.

Multi-index sets of interest for this work are the \emph{total degree} index set, defined by
\begin{equation}\label{eq:TD}
\Lambda_n^{\mathrm{TD}}:=\bigg\{ \bm{\nu} =\left(\nu_k\right)_{k=1}^d \in \mathbb{N}_0^d: \sum_{k=1}^d \nu_k \leq n\bigg\},
\end{equation}
and the \emph{hyperbolic cross} index set, defined by
\begin{equation}\label{eq:HC}
\Lambda_n^{\mathrm{HC}}:=\bigg\{ \bm{\nu} =\left(\nu_k\right)_{k=1}^d \in \mathbb{N}_0^d: \prod_{k=1}^d\left(\nu_k+1\right) \leq n+1\bigg\}.
\end{equation}
Note that the parameter $n\in\N_0$, called the \emph{order} of the index set, and the dimension $d$ affect the cardinality of the index set. There is a closed-form expression for the cardinality of the total degree index set \cite{sparsepoly}, given by
$
\abs{\Lambda_n^{\mathrm{TD}}}= {n+d \choose d}$, for all $d \in \N$, and $n \in \N_0$
The cardinality of the hyperbolic cross does not have an explicit formula, but it can be estimated as
$
\abs{\Lambda_{n-1}^{\mathrm{HC}}} \leq \min \{ 2 n^3 4^d, e n^{2+\log_2(d)} \}
$ (see \cite[Appendix B]{sparsepoly} and references therein).
In particular, $\abs{\Lambda_n^{\mathrm{HC}}} \ll \abs{\Lambda_n^{\mathrm{TD}}} $ for large $d$ or $n$. 
We illustrate these index sets in Figure \ref{fig:indexsets}, with $n=4$ and $d=3$.
\begin{figure}[t]
	\centering
	\includegraphics[width=0.35\linewidth]{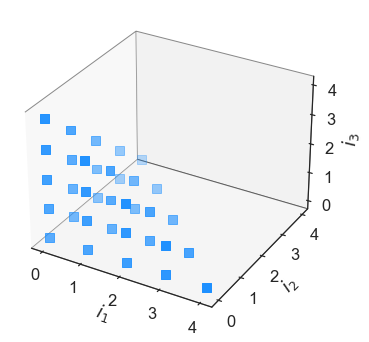}		
    \hspace{1cm}
    \includegraphics[width=0.33\linewidth]{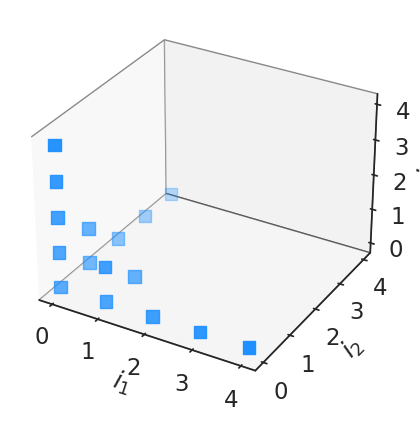}
	\caption{Multi-index sets of order $n=4$ in $\N_0^3$ (left: total degree; right: hyperbolic cross).} %
    \label{fig:indexsets}
\end{figure}
\vspace{5pt}

Different methods can be used to approximate $f$ from pointwise data depending on our knowledge of the set $S$. Here we focus on \emph{least squares} and \emph{compressed sensing}.

Suppose first we know $S=\{\bm{\nu}_1,...,\bm{\nu}_s\}$ a priori. We aim to find the coefficients $ \bm{c} =(c_{\bm{\nu}_1},...,c_{\bm{\nu}_s})\in\C^s$, so we consider the linear system $ \bm{\Psi}  \bm{c} = \bm{b} $, with $ \bm{\Psi} \in\C^{m\times s}$ and $ \bm{b} \in\C^m$, where the entries are defined as $\Psi_{ij}=\frac{1}{\sqrt{m}}\psi_{\bm{\nu}_j}(\bm{y}_i)$ and $b_i=\frac{1}{\sqrt{m}}(f(\bm{y}_i) + n_i)$ for $i=1, \dots, m$, with the points $\bm{y}_1, \dots , \bm{y}_m$ sampled independently from the domain $\mathcal{U}\subseteq\R^d$ according to the probability measure $\varrho$, and where $\bm{n}=(n_i)_{i=1}^m$ represents unknown noise corrupting the data. This leads to the following least-squares problem:
\begin{equation}\label{eq:ls}
\min_{ \bm{z} \in\R^s} \norm{ \bm{\Psi}  \bm{z} - \bm{b} }_2^2.
\end{equation}
An approximation $ \bm{\hat{c}} $ to the vector $ \bm{c} $ can be found using the Moore-Penrose pseudoinverse $\bm{\hat{c}} =( \bm{\Psi} ^* \bm{\Psi} )^{-1} \bm{\Psi} ^* \bm{b}$.
In order to retrieve a unique solution of a least-squares problem, we need an overdetermined system (a system with at least as many linearly independent equations as unknowns). In other words, for $ \bm{\Psi} \in\R^{m\times s}$, we need $m\geq s$. This means that we need more sample points than coefficients $ c_{\bm{\nu}_1}, \dots, c _{\bm{\nu}_s}$ for this method to be effective.
For further details on sparse polynomial approximation via least squares, we refer to \rev{the surveys in} \cite[Chapter 5]{sparsepoly} \rev{and \cite{cohen2015approximation}, and to notable seminal papers such as \cite{cohen2013stability,migliorati2013polynomial,migliorati2014analysis,chkifa2015discrete}}.

On the other hand, suppose we have no \emph{a priori} knowledge of $S$. In this case, a possible strategy is to choose a set $\Lambda\subseteq\N_0^d$ that contains $S$. A common choice of such $\Lambda$ in literature is represented by the hyperbolic cross, since it contains all so-called \textit{lower sets} of a given cardinality; see Appendix \ref{app:lower_sets} for further explanations).
If we consider a larger set $\Lambda$ that contains $S$, we come to a point where the system is underdetermined. In other words, for $ \bm{\Psi} \in\C^{m\times N}$, where $\Psi_{ij}=\frac{1}{\sqrt{m}}\psi_{\bm{\nu}_j}(\bm{x}_i)$ for all $i=1,..,m$, $j=1,...,N$, the cardinality $N = |\Lambda|$ of the set $\Lambda$ becomes greater than the number of sample points $m$. Therefore, if we want to recover the vector $\bm{c}\in\C^N$ of coefficients, this becomes a \emph{compressed sensing} problem \rev{\cite{donoho2006compressed,candes2006robust,foucart2013invitation}}, where we attempt to find the sparsest solution to the system $ \bm{\Psi}  \bm{c} = \bm{b} $. 
Popular methods to solve compressed sensing problems are the \emph{Square Root LASSO (SR-LASSO)} $\ell^1$-minimization problem, i.e.,
\begin{equation}\label{eq:sr-lasso}
    \min_{ \bm{z} \in\C^N} \lambda\norm{ \bm{z} }_1 + \norm{ \bm{\Psi} \bm{z} - \bm{b} }_2 \text {, }
\end{equation}
with $\lambda>0$, and the \emph{Quadratically Constrained Basis Pursuit} (QCBP) $\ell^1$-minimization problem, i.e.,
\begin{equation}
\label{eq:qcbp}
\min_{ \bm{z} \in\C^N} \norm{ \bm{z} }_1 \quad \text { subject to } \norm{ \bm{\Psi} \bm{z} - \bm{b} }_2\leq\eta \text {, }
\end{equation}
where $\eta>0$ and $\| \bm{z} \|_1 = \sum_{i=1}^N |z_i| $ is the $\ell^1 $-norm. Tipically, a good choice of $\eta$ satisfies $\norm{ \bm{n} }_2\leq\eta$. There are known techniques to solve equation \eqref{eq:qcbp}  efficiently, such as Chambolle and Pock’s Primal-Dual algorithm \cite{foucart2013invitation}. The SR-LASSO has the benefit that the optimal choice of the parameter $\lambda$ is independent of the noise $\bm{n}$ \cite{adcock2019correcting}. Note that the solutions to problems \eqref{eq:sr-lasso} or \eqref{eq:qcbp} are in general not sparse, but compressible (i.e., \emph{approximately} sparse). \rev{Notable early contributions on $\ell^1$ minimization for sparse polynomial approximations in high dimension include \cite{blatman2011adaptive,doostan2011non,mathelin2012compressed,rauhut2012sparse,yan2012stochastic}.}

To enhance the compressed sensing performance, a standard approach is to replace the $\ell^1$-norm by a \emph{weighted} $\ell^1$-norm. This approach relies on such weights to represent a form of prior on the unknown vector to be recovered. 
Given a vector of weights $\bm{w}\in \R^N$ with $\bm{w}\geq \bm{0}$, we can replace the $\ell^1$-norm with $\| \bm{z}\|_{1, \bm{w}} := \sum_{i=1}^N w_iz_i$ in equations \eqref{eq:sr-lasso} and \eqref{eq:qcbp}. This yields, respectively, the \emph{weighted SR-LASSO} problem
\begin{equation}\label{eq:wSR-LASSO}
    \min \limits_{ \bm{z} \in \mathbb{C}^N} \lambda \|  \bm{z}  \|_{1, \bm{w} } + \|  \boldsymbol{\Psi}  \bm{z}  - \bm{b}  \|_2
\end{equation}
and the \emph{weighted QCBP} problem
\begin{equation}
\label{eq:wqcbp}
\min_{ \bm{z} \in\C^N} \norm{ \bm{z} }_{1,\bm{w}} \quad \text { subject to } \norm{ \bm{\Psi} \bm{z} - \bm{b} }_2\leq\eta .
\end{equation}
For more information on the theoretical and practical benefits of weighted $\ell^1$-minimization we refer the reader to \cite{sparsepoly} \rev{and to key papers, such as \cite{adcock2019correcting,adcock2017infinite,chkifa2018polynomial,peng2014weighted,rauhut2016interpolation,yang2013reweighted}}.

High-dimensional functions may vary much faster in some coordinate directions than in others. This behavior is referred to as \emph{anisotropy}. For such functions, choices of $S$ that treat all coordinate directions equally usually lead to poor approximations \cite{sparsepoly}. Typically, least squares methods are preferred for known anisotropy, while compressed sensing is the standard option for unknown anisotropy. 
We will be using both least-squares and compressed sensing in our numerical experiments, and compare their performance in different scenarios.

Another important concept in sparse polynomial approximation is the \emph{best $s$-term approximation} of $f$. For $f = \sum_{ \bm{\nu} \in\N_0^d} c_{ \bm{\nu} } \psi_{ \bm{\nu} }\in L_{\varrho}^2(\mathcal{U})$, we define the \emph{best $s$-term approximation error} on its coefficient vector $\bm{c}$ according to a specific $\ell^p$ norm. 

\begin{defn}[Best $s$-term approximation error]\label{def:best_s-term}
    Let $0< p \leq \infty$, $ \bm{c} \in \ell^p(\mathbb{N}_0^d)$ and $s \in \N_0$. The $\ell^p$\emph{-norm best $s$-term approximation error} of $\bm{c}$ is defined as
\begin{equation}
    \sigma_s(\bm{c})_p := \min \big\{ \|\bm{c} - \bm{z} \|_p : \bm{z} \in \ell^p(\mathbb{N}_0^d), \; |\textnormal{supp}(\bm{z})|\leq s \big \}.
\end{equation}
where $\textnormal{supp}(\bm{z}) := \{ \bm{\nu} \in \mathbb{N}_0^d :z_{\bm{\nu}}\neq 0\}$,
\end{defn}

\subsection{Holomorphic regularity}\label{sec:holomorphic}
\noindent In the previous section, we have not discussed what type of functions $f$ can be accurately approximated by sparse polynomials. Holomorphic functions provide an answer to this question.

\begin{defn}[Holomorphic function]
\label{defn:holom}
Consider $f:\mathcal{O}\to\C$ where $\mathcal{O}\subseteq\C^d$ and $d\in\N$. The function $f$ is \emph{holomorphic} in $\mathcal{O}$ if the following limit exists for every $\bm{z} \in\mathcal{O}$ and every $j\in[d]$:
$
\lim_{h\in\C, h \to 0} \frac{f( \bm{z} +h \bm{e} _j) - f( \bm{z} )}{h},
$
where $ \bm{e} _j$ is the $j$-th element of the canonical basis of $\R^d$.
\end{defn}
\smallskip

\noindent Note that Definition \ref{defn:holom} still works if the codomain is $\C^n$ or $\C^{m\times n}$, for $m,n\in \N$. We now define the concept of filled-in Berstein polyellipse.

\begin{defn}[Filled-in Berstein polyellipse]
The \emph{filled-in Berstein polyellipse} of parameter $ \bm{\rho} =(\rho_j)_{j=1}^d\in\R^d$, with $\bm{\rho}>\bm{1}$ (that is, $\rho_j>1$ for $j=1,2,...,d$), denoted as $\mathcal{E}_{\bm{\rho}}$, is the Cartesian product of \emph{filled-in Berstein ellipses} of parameters $\rho_j$, defined as
$\mathcal{E}_{\rho_j} = \left\{ \frac{z+z^{-1}}{2}: z\in\C, 1\leq\abs{z}\leq \rho_j \right\}$.
Namely, $\mathcal{E}_{\bm{\rho}} = \mathcal{E}_{\rho_1} \times \mathcal{E}_{\rho_2} \times ... \times \mathcal{E}_{\rho_d}$.
\end{defn}
\smallskip

\noindent Assume that $f\in L_{\varrho}^2(\mathcal{U})$ can be extended in a holomorphic way to an open set $\mathcal{O}\subseteq\C^d$ such that $\mathcal{E}_\rho\subset\mathcal{O}$ for some $ \bm{\rho} > \bm{1}$, i.e., there exists $\tilde{f}: \mathcal{O}\to \C$ holomorphic such that $\tilde{f}|_{\mathcal{U}}=f $. Then, the best $s$-term approximation error $\sigma_s(\bm{c} )_q$ decays algebraically fast in $s$, with dimension-independent decay rate \cite[Theorem~3.6]{sparsepoly}. Namely,
\begin{equation}
\label{eq:bounderror}
\sigma_s( \bm{c} )_q \leq \frac{\norm{f}_{L^{\infty}(\mathcal{\mathcal{E}}_{\bm{\rho}})}\cdot C(d,p, \bm{\rho} )}{(s+1)^{\frac{1}{p}-\frac{1}{q}}},
\end{equation}
for all $0<p\leq q \leq \infty$ and $s\in\N_0$, where $ \bm{c} =(c_{ \bm{\nu} })_{ \bm{\nu} \in\N_0^d}$ is the infinite vector of coefficients of $f$, $C(d,p, \bm{\rho} )>0$ is a constant depending on $d$, $p$ and $ \bm{\rho} $ explicitly defined in \cite[Theorem~3.6]{sparsepoly}, and $L^{\infty}(\mathcal{\mathcal{E}}_{\bm{\rho}})$ is the space of essentially bounded functions $f:\mathcal{E}_{\bm{\rho}}\to\C$. 

Furthermore, with the holomorphic assumption on $f$, it is possible to show quasi-optimal convergence guarantees of sparse polynomial approximation via least squares or compressed sensing. Indeed, denoting with $\hat{f}_{\mathsf{LS}}$ a solution to problem \eqref{eq:ls} for a suitable polynomial space $S$, we have 
$$
\norm{f-\hat{f}_{\mathsf{LS}}}_{L_\varrho^2(\mathcal{U})} \leq C \cdot (m/\log(m/\epsilon))^{1/2-1/p},
$$
with probability at least $1-\epsilon$; see \cite[Theorem 6.1]{adcock2022monte} and Appendix~\ref{app:proofCorLS}. Similarly, denoting with $\hat{f}_{\mathsf{CS}}$ a solution to problem \eqref{eq:sr-lasso} respectively, we have
$$
\norm{f-\hat{f}_{\mathsf{CS}}}_{L_\varrho^2(\mathcal{U})} \leq \norm{f}_{L^{\infty}(\mathcal{E}_{\bm{\rho}})} \cdot C \cdot \Tilde{m}^{1/2-1/p},
$$
with probability at least $1-\epsilon$ and where 
$\tilde{m}=m / (c\cdot \tilde{L})$. Here  $\tilde{L}$ is a polylogarithmic factor that will be defined in Corollary~\ref{cor:CS} and
$c>0$ is a universal constant. Consequently, least squares and compressed sensing approximation achieve the same algebraic rate of convergence as that of the best $s$-term approximation in terms of $m$, up to log factors \cite[Theorem~7.12]{sparsepoly}.

\subsection{Community structure assumption for diffusion on graphs}

\noindent Consider a weighted graph $G=(V, E)$ with weighted adjacency matrix $\bm{W}$ of bounded size (i.e., $\abs{V}<\infty$) and with $K$ communities $\mathcal{C}_1, \mathcal{C}_2, ..., \mathcal{C}_K$. In the time-and-edge-dependent diffusion case, we consider the parametric map 
\begin{equation}
\label{eq:parametricmap}
\bm{C}:[0,T] \times  \mathcal{U}  \to \C^{|V| \times |V|}
\end{equation}
that describes how the (undirected) diffusion coefficients of the edges change over the parametric domain $\mathcal{U} \subset \mathbb{R}^{d}$ for fixed $t$. We assume $\mathcal{U}$ to be a compact hyperrectangle and without loss of generality, up to change of coordinates, we can assume that $\mathcal{U} = [-1,1]^d$. 

In our setup, we assign the same diffusion coefficient to the set of edges linking a pair of communities ($\mathcal{C}_i$, $\mathcal{C}_j$), for $i,j=1,2,...,K$. Thus, for a parameter $ \bm{y} =(y_1,...,y_d)$, we assign one entry $y_k$ of the parameter to the set of edges between the communities denoted by the pair  ($\mathcal{C}_i$, $\mathcal{C}_j$) and, by symmetry, the pair ($\mathcal{C}_j$, $\mathcal{C}_i$). As a result, we have the relation 
\begin{equation}\label{eq:def_d}
    d=\frac{K(K+1)}{2}.
\end{equation}
linking the dimension $d$ with the number of communities $K$.
The resulting image is a symmetric block matrix $\bm{C} (t)$, where each submatrix $ C_{ij}$ (or $ C(t)_{ij}$) gives the diffusion coefficients along all the edges between the communities $\mathcal{C}_i$ and $\mathcal{C}_j$. To provide some intuition about the community-based structure of the diffusivity map, we can think about, e.g., consensus dynamics in a political network (e.g., a parliament). In that context it is natural to expect that information will flow differently (i.e., faster or slower) within each party or across parties.

\rev{
\begin{remark}[Interpretation of the parameter $\bm{y}$] Throughout the paper we will interpret $\bm{y} \in [-1,1]^d$ as a deterministic vector defining the parametric diffusivity matrix $\bm{y} \mapsto \bm{C}(\cdot, \bm{y})$. However, it is worth pointing out that in applications involving uncertainty quantification \cite{smith2024uncertainty} the parameter $\bm{y}$ is typically a random vector modelling volatility in the diffusivity matrix $\bm{C}(\cdot, \bm{y})$ (which, in that scenario, would also be a random variable). 
\end{remark}
}

Additionally, we require that the map $\bm{C}$ has an holomorphic extension over a compact set $\mathcal{K}$. This requirement is essential to obtain recovery guarantees for the parametric diffusion map via least squares and compressed sensing (Section \ref{subsec:LS_CS}).

The following Assumption, that will be used in the main result of this paper (Theorem \ref{thm:mainthm}), summarizes the requirements just discussed.

\begin{ass}[Holomorphic extension]\label{ass:diffusion_matrix}
There exists a compact set $\mathcal{K}\subset \mathbb{C}^d$, with $\mathcal{U} \subset \mathcal{K}$,  and a map $\tilde{\bm{C}} : [0,T] \times \mathcal{K} \to \mathbb{C}^{|V| \times |V|}$ such that $\tilde{\bm{C}}|_{[0,T] \times \mathcal{U}} = \bm{C}$, $\tilde{\bm{C}}$ is continuous in $[0,T] \times \mathcal{K}$ and $\tilde{\bm{C}}(t, \cdot)$ is holomorphic in $\mathring{\mathcal{K}}$ for every $t \in [0,T]$. Moreover, we consider $\bm{C}$ of the form
$$
\bm{C}(t,\bm{y}) =
\begin{pmatrix}
    \bm{C}^{(1,1)}(t, \bm{y}) & \cdots & \bm{C}^{(1,K)}(t,\bm{y})\\
    \vdots & \ddots & \vdots\\
    \bm{C}^{(K,1)}(t,\bm{y}) & \cdots & \bm{C}^{(K,K)}(t,\bm{y})\\
\end{pmatrix},
$$
where $\bm{C}^{(i,j)}(t,\bm{y}) \in \mathbb{C}^{|\mathcal{C}_i| \times |\mathcal{C}_j|}$  satisfies $\bm{C}^{(i,j)}(t, \bm{y}) = c^{(i,j)}(t,\bm{y}) \cdot \mathds{1}$ with $\mathds{1}$ being an $|\mathcal{C}_i|\times |\mathcal{C}_j|$ matrix of 1's and such that $c^{(i,j)}(t,\bm{y}) = c^{(j,i)}(t,\bm{y})$ for all $i,j \in [K]$. In particular, $\bm{C}(t,\bm{y})$ is symmetric. The number of distinct functions $c^{(i,j)}(t,\bm{y})$ is $d$ as in \eqref{eq:def_d}.
\end{ass}
\rev{Whereas the presence of a community structure in the graph and the continuity of the diffusivity matrix with respect to the time variable are  intrinsic features of the problem, the specific form of the parametrization $\bm{y}\mapsto \bm{C}(\cdot, \bm{y})$ can be designed by the user. Hence,  Assumption~\ref{ass:diffusion_matrix} should be thought of as a guiding principle to be followed while parametrizing the family of problems of interest.} We now provide a typical example of parametrization satisfying this assumption. \rev{In the numerical experiments of Section~\ref{sec:numerics} only parametric diffusivity matrices of this form will be considered.}

\begin{example}\label{example:diffusion_matrix}
Given a parameter $ \bm{y} =(y_1,y_2,...,y_d)\in [-1,1]^d$, with $d$ as in \eqref{eq:def_d}, the matrix $ \bm{C}( t, \bm{y})$ in equation \eqref{eq:ted} can be constructed using an \textit{affine parametrization}, e.g.,
\begin{equation}
\label{eq:Ct}
\bm{C}(t, \bm{y}) =
\begin{pmatrix}
    \frac{y_{\sigma(1,1)}+1}{2}\cdot h_{\sigma(1,1)}(t)\mathds{1} & \cdots & \frac{y_{\sigma(1,K)}+1}{2}\cdot h_{\sigma(1,K)}(t)\mathds{1}\\
    \vdots & \ddots & \vdots\\
    \frac{y_{\sigma(K,1)}+1}{2}\cdot h_{\sigma(K,1)}(t)\mathds{1} & \cdots & \frac{y_{\sigma(K,K)}+1}{2}\cdot h_{\sigma(K,K)}(t)\mathds{1}\\
\end{pmatrix},
\end{equation}
where in the $(i,j)$-th block,  $\mathds{1}\in\R^{\abs{\mathcal{C}_i}\times\abs{\mathcal{C}_j}}$ denotes a matrix of ones of suitable dimension, $\sigma: [K] \times [K] \mapsto [d]$ is a function that defines a unique ordering (e.g., lexicographic) on $\{(i,j) \in [K]^2 : i \leq j\}$ and with $\sigma(i,j) = \sigma (j,i)$ for $i > j$. The functions $h_1(t),...,h_d(t)$ are continuous on  $[0,T]$. 
Affine parametrizations are very popular in modeling parametric PDEs \cite{sparsepoly,cohen2015approximation} and represent a simple way to model diffusivity parameters ranging in some prescribed interval. 
\end{example}

\section{Theoretical guarantees}\label{sec:main_results}

\noindent We are now in a position to state the main theoretical results of the paper. In the case of diffusion on graphs, our goal is to construct a polynomial that approximates the parameter-to-solution map $f_v:\mathcal{U}=[-1,1]^d\to\C$ defined as
\begin{equation}
\label{eq:goal}
f( \bm{y} )=u_v(T,\bm{C}( \cdot, \bm{y} ))
\end{equation}
for a given node $v$, where $\bm{u}(\cdot,\bm{C}(\cdot, \bm{y}))$ is the unique solution to the problem \eqref{eq:ted} with $\bm{C}(t) = \bm{C}(t,\bm{y})$. As illustrated in Section \ref{sec:holomorphic}, functions that can be extended in a holomorphic way have desirable properties with respect to the recoverability of their polynomial approximation; hence our goal is to show that there exists a holomorphic extension for $f$. We will use \cite{walter} and \cite[Lemma~4.3]{sparsepoly} which states the following:

\begin{lem}[Holomorphic dependence; Volterra integral equations]
\label{lem:holo}
Let $T>0$, $n,d \in \N$, $\mathcal{K}\subset\C^d$ be a compact set, and $ \bm{g} :[0,T]\times\mathcal{K}\to\C^n$ and $ \bm{h} :[0,T]^2\times\C^n\times\mathcal{K}\to\C^n$ be continuous functions. Moreover, suppose that there exists a constant $L>0$ such that $ \bm{h} $ satisfies the Lipschitz condition
\begin{equation}
\label{eq:lemma_Lip_cond}
\norm{ \bm{h} (t,s, \bm{u} , \bm{z} ) -  \bm{h} (t,s, \bm{v} , \bm{z} )} \leq L\norm{ \bm{u} - \bm{v} }, \quad \forall t,s\in[0,T],  \quad \forall  \bm{u} , \bm{v} \in\C^n, \forall  \bm{z} \in\mathcal{K},
\end{equation}
for some norm $\norm{\cdot}$ over $\C^n$. Then the Volterra integral equation
$$
 \bm{u} (t, \bm{z} ) =  \bm{g} (t, \bm{z} ) + \int_{0}^{t}  \bm{h} (t,s, \bm{u} (s, \bm{z} ), \bm{z} ) ds, \quad  \forall t\in[0,T],
$$
admits a unique solution $t\mapsto \bm{u} (t, \bm{z} )$ for every $ \bm{z} \in\mathcal{K}$ and the mapping $(t, \bm{z} )\mapsto \bm{u} (t, \bm{z} )$ is continuous in $[0,T]\times\mathcal{K}$. In addition, let $\mathring{\mathcal{K}}$ be the interior of $\mathcal{K}$ and assume that $ \bm{z} \mapsto \bm{g} (t, \bm{z} )$ is holomorphic in $\mathring{\mathcal{K}}$ for any fixed $t\in[0,T]$ and $( \bm{v} , \bm{z} )\mapsto \bm{h} (t,s, \bm{v} , \bm{z} )$ is holomorphic in $\C^n\times\mathring{\mathcal{K}}$ for any fixed $(t,s)\in[0,T]^2$. Then $ \bm{z} \mapsto \bm{u} (t, \bm{z} )$ is holomorphic in $\mathring{\mathcal{K}}$ for any fixed $t\in[0,T]$.  
\end{lem}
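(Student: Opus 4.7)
The plan is to establish existence, uniqueness, and continuity of the solution via a standard Picard iteration argument, and then upgrade this to holomorphic dependence on $\bm{z}$ by combining induction with a Weierstrass-type convergence theorem for holomorphic functions. Throughout, the compactness of $[0,T]\times \mathcal{K}$ and continuity of $\bm{g}$ and $\bm{h}$ will provide uniform bounds that justify all the limiting operations.

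First, I would define the Picard iterates by $\bm{u}_0(t,\bm{z}) := \bm{g}(t,\bm{z})$ and
$$
\bm{u}_{n+1}(t,\bm{z}) := \bm{g}(t,\bm{z}) + \int_0^t \bm{h}(t,s,\bm{u}_n(s,\bm{z}),\bm{z})\, ds.
$$
Continuity of $\bm{g}$ and $\bm{h}$ on the compact domain together with the Lipschitz hypothesis \eqref{eq:lemma_Lip_cond} yields, by induction, the classical bound $\|\bm{u}_{n+1}(t,\bm{z}) - \bm{u}_n(t,\bm{z})\| \leq M (Lt)^n/n!$ for some $M>0$ depending on $\sup \|\bm{u}_1 - \bm{u}_0\|$. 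Summing this series gives uniform convergence of $(\bm{u}_n)$ on $[0,T]\times\mathcal{K}$ to a limit $\bm{u}$, which solves the Volterra equation by passing to the limit inside the integral (justified by uniform convergence on compacts combined with continuity of $\bm{h}$). Continuity of $\bm{u}$ on $[0,T]\times\mathcal{K}$ follows from the uniform convergence of continuous maps, and uniqueness is a standard Gr\"onwall-type argument.

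Next, for the holomorphic part, I would prove by induction on $n$ that $\bm{z} \mapsto \bm{u}_n(t,\bm{z})$ is holomorphic in $\mathring{\mathcal{K}}$ for every fixed $t \in [0,T]$. The base case holds by the hypothesis on $\bm{g}$. For the inductive step, assuming $\bm{z} \mapsto \bm{u}_n(s,\bm{z})$ is holomorphic for every fixed $s$, the composition $\bm{z} \mapsto \bm{h}(t,s,\bm{u}_n(s,\bm{z}),\bm{z})$ is holomorphic for every fixed $(t,s)$ by the joint holomorphy of $\bm{h}$ in $(\bm{v},\bm{z})$. The key technical step is to transfer holomorphy through the integral: applying Fubini and Morera's theorem componentwise (equivalently, differentiating under the integral, justified by the uniform continuity of $\bm{h}$ and $\bm{u}_n$ on compact subsets of $[0,T]^2\times\mathring{\mathcal{K}}$), the map $\bm{z}\mapsto\int_0^t \bm{h}(t,s,\bm{u}_n(s,\bm{z}),\bm{z})\, ds$ remains holomorphic in $\mathring{\mathcal{K}}$.

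Finally, since $\bm{u}_n \to \bm{u}$ uniformly on $[0,T]\times\mathcal{K}$, for each fixed $t$ the sequence $\bm{z} \mapsto \bm{u}_n(t,\bm{z})$ converges uniformly on compact subsets of $\mathring{\mathcal{K}}$. By the Weierstrass convergence theorem applied componentwise, the limit $\bm{z}\mapsto \bm{u}(t,\bm{z})$ is holomorphic in $\mathring{\mathcal{K}}$. The main obstacle I anticipate is the careful verification of the holomorphy-through-integration step, especially ensuring that $(s,\bm{z})\mapsto \bm{h}(t,s,\bm{u}_n(s,\bm{z}),\bm{z})$ is jointly continuous on $[0,t]\times \overline{\mathcal{K}'}$ for compact $\mathcal{K}'\subset\mathring{\mathcal{K}}$, so that one may legitimately interchange $\int_0^t ds$ with contour integrals $\oint$ in $\mathring{\mathcal{K}}$ and invoke Morera coordinatewise.
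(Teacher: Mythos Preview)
The paper does not prove this lemma; it is quoted as \cite[Lemma~4.3]{sparsepoly} (with roots in \cite{walter}) and used as a black box in the proof of Theorem~\ref{thm:mainthm}. Your Picard iteration combined with the Weierstrass convergence theorem is the standard argument and is correct; the joint continuity needed for the Morera/Fubini interchange follows inductively from the continuity of $\bm{u}_n$ on $[0,T]\times\mathcal{K}$ and of $\bm{h}$ on its full domain, exactly as you anticipate.
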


\rev{
\begin{remark}[Choice of the vector norm $\|\cdot\|$] The Lipschitz condition \eqref{eq:lemma_Lip_cond} of Lemma~\ref{lem:holo} depends on the choice of vector norm $\|\cdot\|$. We observe that this choice has no practical implications on the conclusion of the lemma. In the proof of Theorem~\ref{thm:mainthm}, which will crucially rely on Lemma~\ref{lem:holo}, we will choose $\|\cdot\| = \|\cdot\|_\infty$ because this simplifies computations, but other equally valid choices could be made.
\end{remark}
}

\noindent This lemma implies the existence of a holomorphic extension for the general setup
$$
\bm{f}( \bm{y} )= \bm{u}(T,\bm{C}( \cdot,\bm{y} )), \quad\forall \bm{y} \in\mathcal{U}, \quad T>0,
$$ 

\noindent where $\bm{u}(\cdot,\bm{C}(\cdot, \bm{y}))$ is the unique solution to the problem \eqref{eq:ted} with $\bm{C}(t) = \bm{C}(t,\bm{y})$. Note that this  immediately implies the existence of a holomorphic extension for $f_v(\bm{y}) = u_v(T, \bm{C}(\cdot,\bm{y}))$. This leads us to our main theorem.

\begin{theorem}[Holomorphy of parametric graph diffusion]
\label{thm:mainthm}
Let $T>0$ and let the parametric diffusivity matrix $ \bm{C}:[0,T] \times \R^{|V| \times |V|}$ satisfy Assumption \ref{ass:diffusion_matrix} for some compact $\mathcal{K}\subseteq\C^d$ such that $ \mathcal{U}  \subset \mathcal{K}$.
Then the map $\bm{f}:\mathcal{U}=[-1,1]^d\to\C^{|V|}$ defined by $\bm{f}( \bm{y} )= \bm{u}(T, \bm{y} )$ satisfying problem \eqref{eq:ted} admits a holomorphic extension $\tilde{\bm{f}}$ to $\mathring{\mathcal{K}}$. Moreover, for any compact subset $\mathcal{H} \subset \mathring{\mathcal{K}}$, we have
\begin{equation}
\label{eq:unif_upper_bound_ext}
\rev{\|\tilde{\bm{f}}\|_{L^{\infty}(\mathcal{H})}:=\sup_{\bm{z} \in \mathcal{H}} \|\tilde{\bm{f}}(\bm{z})\|_2} \leq  B(\bm{u}_0, T, \rev{\tilde{\bm{M}}}, \rev{\mathcal{H}}),
\end{equation}
where 
\begin{equation}\label{eq:bound_term}
   B(\bm{u}_0, T, \rev{\tilde{\bm{M}}}, \rev{\mathcal{H}}) =  \| \bm{u}_0\|_2 \cdot \exp\left(2\cdot \int_0^T \sup_{ \bm{z}  \in \mathcal{H} } \| \rev{\tilde{\bm{M}}} (t, \bm{z} )\|_{2\to 2} \, d t\right) < \infty,
\end{equation}
\rev{where $\tilde{\bm{M}} (t, \cdot ) := \tilde{\bm{C}}( t,\cdot) \odot  \bm{W} (t) -  \bm{D} ( \tilde{\bm{C}}( t,\cdot), \bm{W}(t))$ is a holomorphic extension of the function $\bm{M}(t, \cdot)$ defined in \eqref{eq:M} to $\mathcal{K}$ for every $t \in [0,T]$, and $\tilde{\bm{C}}$ is as in Assumption~\ref{ass:diffusion_matrix}}.
\end{theorem}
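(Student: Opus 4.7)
The plan is to reformulate the IVP \eqref{eq:ted} as a Volterra integral equation and then apply Lemma \ref{lem:holo}, supplementing its conclusion with Gr\"onwall's inequality to obtain the uniform bound \eqref{eq:unif_upper_bound_ext}. Integrating \eqref{eq:ted} from $0$ to $t$ yields
$$
\bm{u}(t,\bm{y}) \;=\; \bm{u}_0 \;+\; \int_0^t \bm{M}(s,\bm{y})\,\bm{u}(s,\bm{y}) \, ds, \quad t \in [0,T], \; \bm{y} \in \mathcal{U},
$$
so I would instantiate the lemma with $\bm{g}(t,\bm{z}) \equiv \bm{u}_0$ and $\bm{h}(t,s,\bm{v},\bm{z}) := \tilde{\bm{M}}(s,\bm{z})\,\bm{v}$, where $\tilde{\bm{M}}$ is the natural extension of $\bm{M}$ to $[0,T] \times \mathcal{K}$ obtained by substituting $\tilde{\bm{C}}(s,\cdot)$ from Assumption \ref{ass:diffusion_matrix} for $\bm{C}(s,\cdot)$ in \eqref{eq:M}.

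Verifying the hypotheses of Lemma \ref{lem:holo} is the bulk of the work. Constancy makes $\bm{g}$ trivially continuous and holomorphic. The matrix $\tilde{\bm{M}}(s,\bm{z})$ is jointly continuous on $[0,T]\times \mathcal{K}$ because $\tilde{\bm{C}}$ is continuous, $\bm{W}$ is continuous in $t$, and the operations in \eqref{eq:M}---the entrywise Hadamard product and the linear map \eqref{eq:D}---act entrywise; for the same reason, $\tilde{\bm{M}}(s,\cdot)$ is holomorphic on $\mathring{\mathcal{K}}$ for every fixed $s$. Hence $\bm{h}$ is continuous on $[0,T]^2 \times \mathbb{C}^{|V|} \times \mathcal{K}$ and holomorphic in $(\bm{v},\bm{z})$ for each $(t,s)$. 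A uniform Lipschitz constant for \eqref{eq:lemma_Lip_cond} is $L := \sup_{(s,\bm{z})\in [0,T]\times \mathcal{K}}\|\tilde{\bm{M}}(s,\bm{z})\|_{2\to 2}$, which is finite by compactness. Lemma \ref{lem:holo} then produces a unique $\tilde{\bm{u}}:[0,T]\times \mathcal{K}\to \mathbb{C}^{|V|}$, continuous in $(t,\bm{z})$ and holomorphic in $\bm{z}\in \mathring{\mathcal{K}}$ for every fixed $t$; setting $\tilde{\bm{f}}(\bm{z}):=\tilde{\bm{u}}(T,\bm{z})$ furnishes the required holomorphic extension.

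For the quantitative estimate \eqref{eq:unif_upper_bound_ext}, I would take the $\ell^2$-norm of the Volterra identity satisfied by $\tilde{\bm{u}}$, apply the triangle inequality together with the operator bound $\|\tilde{\bm{M}}(s,\bm{z})\bm{v}\|_2 \le \|\tilde{\bm{M}}(s,\bm{z})\|_{2\to 2}\|\bm{v}\|_2$, and invoke Gr\"onwall's inequality on $t\mapsto \|\tilde{\bm{u}}(t,\bm{z})\|_2$ to obtain
$$
\|\tilde{\bm{u}}(T,\bm{z})\|_2 \;\le\; \|\bm{u}_0\|_2 \exp\!\left(\int_0^T \|\tilde{\bm{M}}(s,\bm{z})\|_{2\to 2}\,ds\right).
$$
Since the right-hand side is monotone in $\|\tilde{\bm{M}}\|_{2\to 2}$, replacing the integrand pointwise by $\sup_{\bm{z}\in \mathcal{H}}\|\bm{M}(s,\bm{z})\|_{2\to 2}$ and then taking the supremum of the left-hand side over $\bm{z}\in \mathcal{H}$ recovers the form of \eqref{eq:bound_term}, with any bookkeeping slack absorbed into the constant in the exponent.

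The step I expect to be most delicate is the entrywise verification that $\tilde{\bm{M}}$ inherits from $\tilde{\bm{C}}$ both joint continuity on $[0,T]\times \mathcal{K}$ and fibrewise holomorphy on $\mathring{\mathcal{K}}$, since the construction in \eqref{eq:M} mixes the time-dependent weights $\bm{W}(t)$ with the parametric $\tilde{\bm{C}}(t,\bm{z})$ and the nonlinear-looking degree map \eqref{eq:D}. Once that bookkeeping is done, the Volterra reformulation, the application of Lemma \ref{lem:holo}, and Gr\"onwall's inequality combine essentially mechanically to deliver the theorem.
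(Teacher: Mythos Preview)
Your proposal is correct and follows the same architecture as the paper: Volterra reformulation of \eqref{eq:ted}, verification of the hypotheses of Lemma~\ref{lem:holo} (continuity, holomorphy, uniform Lipschitz), and then a Gr\"onwall argument for the bound. The only noteworthy difference is in Step~2: the paper differentiates $\|\tilde{\bm{u}}(t,\bm{z})\|_2^2$, bounds $\partial_t\|\tilde{\bm{u}}\|_2^2$ via the numerical radius by $2\|\tilde{\bm{M}}\|_{2\to 2}\|\tilde{\bm{u}}\|_2^2$, and applies the differential Gr\"onwall inequality, which is where the factor $2$ in \eqref{eq:bound_term} originates; you instead take norms directly in the integral identity and apply the integral Gr\"onwall inequality to $\|\tilde{\bm{u}}(t,\bm{z})\|_2$, which yields the sharper exponent $\int_0^T\|\tilde{\bm{M}}\|_{2\to 2}\,ds$ without the $2$. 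Since your bound is stronger, \eqref{eq:unif_upper_bound_ext} follows \emph{a fortiori}, and the ``bookkeeping slack'' you mention is genuinely slack rather than a gap.
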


\begin{proof}
The proof is organized in two main steps. First, we prove the existence of a holomorphic extension to $\bm{f}$ (Step 1). Then, we show the validity of the uniform upper bound \eqref{eq:unif_upper_bound_ext} (Step 2).

\paragraph{Step 1: existence of a holomorphic extension}  We define a holomorphic extension $\tilde{\bm{f}}$ in a natural way using the complex-valued extension $\tilde{\bm{C}}$ to $\bm{C}$, whose existence is guaranteed by Assumption~\ref{ass:diffusion_matrix}. Now, observe that we can reformulate problem \eqref{eq:ted} (where $\bm{C}$ is replaced by $\tilde{\bm{C}}$) as the parametric Volterra integral equation
$
\tilde{ \bm{u} }(t, \bm{z} )= \bm{u} _0 + \int_{0}^{t}  \bm{h} (t,s,\tilde{ \bm{u} }(s, \bm{z} ), \bm{z} ) ds$, $t \in [0,T],
$
where $ \bm{h} (t,s, \bm{u} , \bm{z} )= \tilde{\bm{M}} (t, \bm{z} ) \bm{u} $ and $ \tilde{\bm{M}} (t, \bm{z} )= \tilde{\bm{C}}( t,\bm{z}) \odot  \bm{W} (t) -  \bm{D} ( \tilde{\bm{C}}( t,\bm{z}), \bm{W}(t))$, with $\bm{D}$ defined as in equation \eqref{eq:D}.

For any $ \bm{z}  \in \mathbb{C}^d$, we consider the solution $\tilde{ \bm{u} }(t, \bm{z} )$ be the solution for the diffusion equation \eqref{eq:ted} on a graph associated with the matrix $\tilde{\bm{C}}(t,\bm{y})$ (having complex-valued entries). The existence and uniqueness of $\tilde{\bm{u}}(\cdot, \bm{z})$ is guaranteed by Lemma \ref{lem:holo}, whose conditions will be checked below. Then, we simply define $\tilde{\bm{f}}( \bm{z} ) = \tilde{ \bm{u} }(T, \bm{z} )$. Our goal is to show that $ \bm{z} \mapsto\tilde{ \bm{u} }(t, \bm{z} )$ is holomorphic in $\mathring{\mathcal{K}}$ for all $t\in [0,T]$. Furthermore, observe from Assumption \ref{ass:diffusion_matrix} that the entries of $ \tilde{\bm{C}}(t,\bm{z})$ are bounded from above (as $\tilde{\bm{C}}$ is continuous on a compact set). 
To apply Lemma \ref{lem:holo}, we need to verify that:
\begin{itemize}
\item $ \bm{g} (t, \bm{z})\equiv \bm{u}_0$ is continuous in $[0,T] \times \mathcal{K}$, and $ \bm{h} (t,s, \bm{u} , \bm{z} )= \tilde{\bm{M}} (t, \bm{z} ) \bm{u} $ is continuous in $[0,T]^2 \times  \mathbb{C}^{|V|}  \times \mathcal{K}  $;
\item the parametric map $ \bm{z} \mapsto \bm{g} (t, \bm{z} )$ is holomorphic for any fixed $t\in[0,T]$,
\item the parametric map $(\bm{v},\bm{z}) \mapsto \bm{h}(t,s,\bm{v},\bm{z})$ is holomorphic in $\mathbb{C}^{|V|} \times \mathcal{K}$,
\item $\norm{ \bm{h} (t,s, \bm{u} , \bm{z} ) -  \bm{h} (t,s, \bm{v} , \bm{z} )} \leq L\norm{ \bm{u} - \bm{v} }$ for some Lipschitz constant $L>0$, $\forall t,s\in[0,T]$, $\forall  \bm{u}, \bm{v} \in\C^{|V|}$, $\forall \bm{z} \in\mathcal{K}$. \rev{Here we will verify this condition for $\|\cdot\| = \|\cdot\|_\infty$.}
\end{itemize}

Firstly, $ \bm{g} (t, \bm{z} )\equiv  \bm{u}_0$ is a constant function, therefore $ \bm{g} $ is continuous in $[0,T]\times\mathcal{K}$ and holomorphic in $\mathring{\mathcal{K}}$ for all fixed $t\in [0,T]$. 

Next, we want to show that $ \bm{h} $ is continuous in $[0,T]^2 \times  \mathbb{C}^{|V|} \times  \mathcal{K}$, and holomorphic in $\mathring{\mathcal{K}}$. First recall that affine maps are holomorphic and that composition of holomorphic maps is holomorphic \cite{henrici1993applied}. The map $ \bm{z} \mapsto \tilde{\bm{C}}( t,\bm{z})$ is holomorphic by Assumption \ref{ass:diffusion_matrix}. For two generic matrices $ \bm{A} $ and $ \bm{B} $, the map $ \bm{B} \mapsto \bm{B} \odot \bm{A} $ is linear in $ \bm{B} $, therefore it is holomorphic. Furthermore, the composition of two holomorphic functions is holomorphic. Thus, the map $ \bm{z} \mapsto \tilde{\bm{C}}( t, \bm{z})\odot \bm{W} (t)$ obtained from the composition of holomorphic maps is also holomorphic. Moreover, the map $ \tilde{\bm{C}} \mapsto\delta_{ij}\sum_k \tilde{C}_{ik}W_{ik}$ is linear in $ \tilde{\bm{C}} $, so it is holomorphic. Thus, the map $ \bm{z} \mapsto \bm{D} ( \tilde{\bm{C}} ( \bm{z} ))$, which is the composition of $ \bm{z} \mapsto \tilde{\bm{C}} ( \bm{z} )$ and $ \tilde{\bm{C}} \mapsto\delta_{ij}\sum_k \tilde{C}_{ik}W_{ik}$, is also holomorphic. Finally, the difference of two holomorphic functions is holomorphic. It then follows that $ \bm{z} \mapsto \tilde{\bm{C}} (t, \bm{z})\odot \bm{W }(t) -  \bm{D} ( \tilde{\bm{C}} ( \bm{z} ))$ is holomorphic. This shows that $ h $ is holomorphic.

All that is left to verify is that the Lipschitz condition \eqref{eq:lemma_Lip_cond} holds for $ \bm{h} (t,s, \bm{u} , \bm{z} )= \tilde{\bm{M}} (t, \bm{z} ) \bm{u} $. We start by observing that
$\norm{ \bm{h} (t,s, \bm{u} , \bm{z} ) -  \bm{h} (t,s, \bm{v} , \bm{z} )} = \norm{ \tilde{\bm{M}} (t, \bm{z} )( \bm{u} - \bm{v} )} 
    \leq \norm{ \tilde{\bm{M}} (t, \bm{z} )}\cdot\norm{ \bm{u} - \bm{v} }$,
where the norm in $\norm{ \tilde{\bm{M}} (t, \bm{z} )}$ is the matrix norm induced by $\|\cdot\|$. Here, we choose $\norm{\cdot}=\norm{\cdot}_\infty$. 
We now need to find an upper bound to $\norm{ \tilde{\bm{M}} (t, \bm{z} )}_{\infty \rightarrow \infty} = \max_{i\in V}\norm{\mathrm{row}_i( \tilde{\bm{M}} (t, \bm{z} ))}_1$. We know that
\begin{align*}
    \norm{\mathrm{row}_i( \tilde{\bm{M}} (t, \bm{z} ))}_1 
    &= \sum_{j\in V}\abs{\tilde{M}_{ij}(t, \bm{z} )} 
    = \sum_{j\in V}\abs{\tilde{C}_{ij}(t, \bm{z} )W(t)_{ij} - \delta_{ij}\sum_{k\in V}\tilde{C}_{ik}(t, \bm{z} )W(t)_{ik}} \\
    &\leq \sum_{j\in V}\abs{\tilde{C}_{ij}(t, \bm{z} )}W(t)_{ij} + \sum_{k\in V}\abs{\tilde{C}_{ik}(t, \bm{z} )}W(t)_{ik} \\
    &= 2\cdot \sum_{j\in V}\abs{\tilde{C}_{ij}(t, \bm{z} )}W(t)_{ij}   
    \leq 2 \cdot \max_{j \in V}\abs{\tilde{C}_{ij}(t, \bm{z} )}\cdot \sum_{j\in V}W(t)_{ij} < \infty.
\end{align*}
Since $ \tilde{\bm{C}} $ is continuous in $(t, \bm{z} )$ and $[0,T]\times\mathcal{K}$ is compact, the image $ \tilde{\bm{C}} ([0,T]\times\mathcal{K})$ is also compact. Hence, the maximum in the last line is finite. The sum in the last line is also finite, because $|V|$ is finite and $\bm{W}: [0,T] \mapsto \R^{|V| \times |V|}$ is continuous on a compact set. 
Therefore, $\norm{ \bm{M} }_\infty$ is bounded, and the Lipschitz condition \eqref{eq:lemma_Lip_cond} holds with $L = \norm{ \bm{M} }_{\infty\rightarrow \infty }<\infty$.

\paragraph{Step 2: validity of the uniform bound \eqref{eq:unif_upper_bound_ext}} We want to bound $\|\tilde{\bm{f}}\|_{L^\infty(\mathcal{H})}$. Note that 
\begin{equation}
   \|\tilde{\bm{f}}\|_{L^\infty(\mathcal{H})}  = \sup_{\bm{z} \in \mathcal{H}} \| \tilde{\bm{f}}(\bm{z}) \|_2 
   = \sup_{\bm{z} \in \mathcal{H}} \| \tilde{\bm{u}}(T,\bm{z}) \|_2.
\end{equation}
Recall from Step 1 that  $\tilde{\bm{u}}= \tilde{\bm{u}}(\cdot, \bm{z})$ satisfies $\dot{\tilde{\bm{u}}} = \tilde{\bm{M}} \tilde{\bm{u}}$ with $\tilde{\bm{M}}(t,\bm{z}) = \tilde{\bm{C}}(t,\bm{z}) \odot \bm{W}(t) - \bm{D}(\tilde{\bm{C}}(t,\bm{z}))$.

Now, let $\tilde{\bm{v}}(t) :=  \sup_{\bm{z} \in \mathcal{H}} \bm{v}(t,\bm{z})$, where $ \bm{v}(t,\bm{z}) := \| \tilde{\bm{u}}(t,\bm{z}) \|_2^2 =  \langle \tilde{\bm{u}}(t,\bm{z}) , \tilde{\bm{u}}(t,\bm{z}) \rangle  $ and $\langle \bm{v}, \bm{w} \rangle = \bm{v}^* \bm{w}$ for any $\bm{v}, \bm{w} \in \mathbb{C}^{|V|}$. To make the notation lighter, we define $\dot{\tilde{\bm{u}}}(t,\bm{z}) = \frac{\partial \tilde{\bm{u}}(t,\bm{z})}{\partial t}$. We have that
\begin{align*}
    \frac{\partial \bm{v}(t,\bm{z})}{\partial t} 
    & =  \langle \tilde{\bm{u}}(t,\bm{z}), \dot{\tilde{\bm{u}}}(t,\bm{z})\rangle + \langle \dot{\tilde{\bm{u}}}(t,\bm{z}) ,  \tilde{\bm{u}}(t,\bm{z}) \rangle \\ 
    & = \langle \tilde{\bm{u}}, \tilde{\bm{M}} \tilde{\bm{u}} \rangle + \langle  \tilde{\bm{M}} \tilde{\bm{u}}, \tilde{\bm{u}} \rangle 
 = \langle \tilde{\bm{u}}, \tilde{\bm{M}} \tilde{\bm{u}} \rangle + \overline{\langle  \tilde{\bm{u}},   \tilde{\bm{M}} \tilde{\bm{u}} \rangle}  = 2 \cdot \text{Re} \langle  \tilde{\bm{u}}, \tilde{\bm{M}} \tilde{\bm{u}}  \rangle \\
&\leq 2 r(\tilde{\bm{M}}) \| \tilde{\bm{u}} \|_2^2 = 2 r(\tilde{\bm{M})} \bm{v}(t,\bm{z})
\end{align*}
where $r(\tilde{\bm{M}}) = \sup_{\bm{w}} \frac{|\langle \bm{w}, \tilde{\bm{M}} \bm{w} \rangle|}{\langle \bm{w}, \bm{w} \rangle }$ is the \textit{numerical radius} of $\tilde{\bm{M}}$ (see, e.g., \cite{horn2012matrix}). The last inequality follows from the relation
$\frac{\text{Re} \langle \tilde{\bm{u}}, \tilde{\bm{M}} \tilde{\bm{u}} \rangle}{\langle \tilde{\bm{u}}, \tilde{\bm{u}} \rangle} \leq \frac{ | \langle  \tilde{\bm{u}}, \tilde{\bm{M}} \tilde{\bm{u}} \rangle |}{\langle \tilde{\bm{u}}, \tilde{\bm{u}} \rangle} \leq r(\tilde{\bm{M}})$.
Moreover, by the Cauchy-Schwarz inequality, 
\begin{align}
    r(\tilde{\bm{M}}) = \sup_{\bm{w}} \frac{|\langle \bm{w}, \tilde{\bm{M}} \bm{w} \rangle|}{\langle \bm{w}, \bm{w} \rangle } \leq \sup_{\bm{w}} \frac{\| \tilde{\bm{M}}\bm{w} \|_2\|\bm{w} \|_2}{\| \bm{w} \|_2^2} = \|\tilde{\bm{M}} \|_{2\rightarrow 2},
\end{align}
where  $\|\tilde{\bm{M}}\|_{2 \rightarrow2}$ denotes the matrix norm induced by $\|\cdot\|_2$. 

Defining $\beta (t) := \sup_{\bm{z} \in \mathcal{H}} \| \tilde{\bm{M}}(t,\bm{z}) \|_{2\to 2}$, we see that 
$\frac{\partial \bm{v}(t,\bm{z})}{\partial t} \leq \beta (t) \bm{v}(t,\bm{z})$,
which, in turn, leads to $\tilde{\bm{v}}'(t) \leq \beta(t) \tilde{\bm{v}}(t)$.
We can now apply Gronwall's lemma \cite{gronwall1919note}, obtaining
\begin{equation*}
\tilde{\bm{v}}(t) \leq \tilde{\bm{v}}(0) \cdot \exp \left(2\cdot \int_0^T \sup_{ \bm{z}  \in \mathcal{H}} \| \tilde{\bm{M}} (t, \bm{z} )\|_{2\to 2} \, d t\right),
\end{equation*}
which corresponds to \eqref{eq:unif_upper_bound_ext}.
We conclude by showing that $\int_0^T  \beta(t) dt < \infty$. Indeed, we have that $\beta$ is continuous in $t$, as $\| \cdot \|_{2 \to 2}$ is continuous and $\tilde{\bm{M}}(t,\bm{z})$ is continuous. Therefore $ \beta$ in bounded on $[0,T]$ and the integral is finite. This concludes the proof.
\end{proof}

\noindent Note that this proof works for the (stationary) edge-dependent diffusion case, with the corresponding parametric map $ \bm{y} \mapsto \bm{C}(\bm{y} )$, considering it is a special case of the time-and-edge-dependent diffusion case.

\vspace{10pt}
From now on, we will turn our attention back to scalar-valued functions $f_v: \C^d \mapsto \C$, as we wants to find a suitable sparse polynomial approximation to scalar-valued parametric map defined in \eqref{eq:goal} using the methods reviewed in Section \ref{subsec:sparse_pol_approx_concepts}. Note that analogous results could be derived in the vector-valued fields (and therefore, for the whole parametric solution of the diffusion map $\bm{y}\mapsto \bm{u}(\cdot, \bm{y})$) (see Remark \ref{rem:vector_valued_f}). 

\subsection{Best $s$-term approximation rates}

\noindent Based on Theorem \ref{thm:mainthm}, we can infer results about the convergence rate of the best $s$-term approximation error (recall Definition~\ref{def:best_s-term}) for the solution map $f_v$ defined in equation \eqref{eq:goal}. \rev{In fact, by using the holomorphy of $f_v$ one can show that its coefficients with respect to Legendre or Chebyshev orthogonal polynomials exhibit an exponential decay. In turn, an application of Stechkin's inequality (see, e.g., \cite[Section 7.4]{dung2018hyperbolic}) allows one to turn an exponential coefficient decay bound into a super-algebraic best $s$-term approximation rate estimate. Therefore, this shows that holomorphic regularity implies approximate sparsity (or compressibility) with respect to these orthogonal polynomial families. An in-depth survey of this theory can be found in \cite[Chapter~3]{sparsepoly}. Here,} by combining \cite[Theorem~3.6]{sparsepoly} \rev{(see also \cite{cohen2015approximation,cohen2011analytic})} and Theorem \ref{thm:mainthm}, we obtain the following corollary. 

\begin{cor}[Best $s$-term approximation rates for parametric graph diffusion]
\label{cor:maincor}
Let $T>0$ and let the parametric diffusivity matrix $ \bm{C}:[0,T] \times \R^{|V| \times |V|}$  satisfy Assumption \ref{ass:diffusion_matrix}. Let the map $f_v:\mathcal{U}=[-1,1]^d\to\C$, be defined by $f_v( \bm{y} )=  u_v(T, \bm{y} )$, where $v\in V$ and $\bm{u}$ is the solution of problem \eqref{eq:ted}. Moreover, assume that the compact set $\mathcal{K}\subseteq\C^d $ of assumption  \ref{ass:diffusion_matrix} is such that $\mathcal{E}_{\bm{\rho}}\subset\mathring{\mathcal{K}}$ for some $\bm{\rho}>\bm{1}$; let $B(\bm{u}_0, T, \rev{\tilde{\bm{M}}},\rev{\mathcal{E}_{\bm{\rho}}})$ be the bound in equation \eqref{eq:bound_term} and let $ \bm{c} = (c_{\bm{\nu}})_{\bm{\nu} \in \N_0^d} $ be the sequence of coefficient of $f_v$ with respect to either the Chebyshev or Legendre basis, i.e., $f_v = \sum_{ \bm{\nu}  \in \mathbb{N}_0^d} c_{ \bm{\nu}}  \psi_{ \bm{\nu} }$. Then, $\forall s\in\N_0$, and $0 < p \leq q < \infty$ it holds
$$
\sigma_s( \bm{c} )_q \leq \frac{ B(\bm{u}_0, T, \rev{\tilde{\bm{M}}},\rev{\mathcal{E}_{\bm{\rho}}}) \cdot C(p,\bm{\rho},d)}{(s+1)^{\frac{1}{p}-\frac{1}{q}}},
$$
\rev{where the constant $C(p, \bm{\rho}, d) > 0$ is as in \cite[Theorem~3.6]{sparsepoly}}.
\end{cor}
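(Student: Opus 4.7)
The plan is to chain together Theorem~\ref{thm:mainthm} with the standard best $s$-term approximation result for holomorphic functions (equation \eqref{eq:bounderror}, i.e., \cite[Theorem~3.6]{sparsepoly}). Theorem~\ref{thm:mainthm} already gives the vector-valued holomorphic extension $\tilde{\bm{f}}$ of $\bm{f}$ on $\mathring{\mathcal{K}}$, together with the uniform bound \eqref{eq:unif_upper_bound_ext}, so most of the work has been done. What remains is to pass from the vector-valued map to its scalar $v$-th component $f_v$, and then invoke the generic holomorphic-to-sparse-polynomial bound on a suitable Bernstein polyellipse.

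First, I would define $\tilde{f}_v: \mathring{\mathcal{K}} \to \mathbb{C}$ to be the $v$-th component of $\tilde{\bm{f}}$ delivered by Theorem~\ref{thm:mainthm}. Since each coordinate projection is holomorphic and restricts $\bm{f}$ to $f_v$ on $\mathcal{U}$, the map $\tilde{f}_v$ is a holomorphic extension of $f_v$ to $\mathring{\mathcal{K}}$. By hypothesis $\mathcal{E}_{\bm{\rho}} \subset \mathring{\mathcal{K}}$, and $\mathcal{E}_{\bm{\rho}}$ is a compact set (it is a Cartesian product of compact filled-in Bernstein ellipses), so $\tilde{f}_v$ is in particular holomorphic on an open neighborhood of $\mathcal{E}_{\bm{\rho}}$.

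Second, I would apply \cite[Theorem~3.6]{sparsepoly} (as summarized in \eqref{eq:bounderror}) to $f_v$ with the Chebyshev or Legendre basis. This yields, for all $0 < p \leq q < \infty$ and all $s \in \mathbb{N}_0$,
\begin{equation*}
\sigma_s(\bm{c})_q \leq \frac{\|\tilde{f}_v\|_{L^\infty(\mathcal{E}_{\bm{\rho}})} \cdot C(d,p,\bm{\rho})}{(s+1)^{1/p - 1/q}}.
\end{equation*}

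Third, I would bound $\|\tilde{f}_v\|_{L^\infty(\mathcal{E}_{\bm{\rho}})}$ by the vector-valued norm using $|\tilde{f}_v(\bm{z})| \leq \|\tilde{\bm{f}}(\bm{z})\|_2$ for every $\bm{z}$, which gives $\|\tilde{f}_v\|_{L^\infty(\mathcal{E}_{\bm{\rho}})} \leq \|\tilde{\bm{f}}\|_{L^\infty(\mathcal{E}_{\bm{\rho}})}$. Choosing $\mathcal{H} = \mathcal{E}_{\bm{\rho}}$ in \eqref{eq:unif_upper_bound_ext} (which is legitimate since $\mathcal{E}_{\bm{\rho}}$ is a compact subset of $\mathring{\mathcal{K}}$) produces the uniform upper bound $B(\bm{u}_0, T, \bm{M})$ defined in \eqref{eq:bound_term}. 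Substituting this into the inequality above gives the claimed estimate.

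The proof is essentially a straightforward assembly, so I do not anticipate any deep obstacles. The only mildly delicate points are the bookkeeping issues: verifying that the scalar component of a holomorphic vector-valued map is holomorphic, checking that $\mathcal{E}_{\bm{\rho}}$ is an admissible choice for the compact set $\mathcal{H}$ in \eqref{eq:unif_upper_bound_ext}, and correctly dominating the scalar sup-norm by the vector-valued one. None of these require substantial new work beyond what Theorem~\ref{thm:mainthm} already provides.
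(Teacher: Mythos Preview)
Your proposal is correct and follows essentially the same approach as the paper: combine Theorem~\ref{thm:mainthm} with \cite[Theorem~3.6]{sparsepoly} (equation~\eqref{eq:bounderror}), and bound $\|f_v\|_{L^\infty(\mathcal{E}_{\bm{\rho}})}$ via \eqref{eq:unif_upper_bound_ext} with $\mathcal{H}=\mathcal{E}_{\bm{\rho}}$. Your treatment is in fact slightly more explicit than the paper's, since you spell out the passage from the vector-valued extension $\tilde{\bm{f}}$ to its scalar component $\tilde f_v$ and the domination $|\tilde f_v|\le\|\tilde{\bm{f}}\|_2$.
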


This result implies that the best $s$-term approximation error $\sigma_s( \bm{c} )_q$ with respect to Legendre or Chebyshev polynomials decays algebraically fast in $s$. In particular, $f_v$ defined in equation \eqref{eq:goal} admits a holomorphic extension to some compact $\mathcal{K}$, therefore the best $s$-term approximation error is bounded from above as in equation \eqref{eq:bounderror}. The term $\norm{f_v}_{L^{\infty}(\mathcal{E}_{\bm{\rho}})}$ in equation \eqref{eq:bounderror} is bounded as in Theorem \ref{thm:mainthm}, with $\mathcal{H} = \mathcal{E}_{\bm{\rho}}$. \rev{A numerical illustration of Corollary~\ref{cor:maincor} will be provided in Figures~\ref{fig:coeff_best_s-term} and \ref{fig:time_coeff_best_s-term}.}

\rev{The main takeaway of Corollary~\ref{cor:maincor} is that the map $f_v$ is compressible with respect to Legendre and Chebyshev polynomials. This, in turn, implies that  sparse polynomial methods are a suitable option to approximate it. The next section will provide convergence guarantees for such methods.}

\subsection{Convergence guarantees for least squares and compressed sensing}\label{subsec:LS_CS}

\noindent Theorem \ref{thm:mainthm} will be now leveraged to show algebraic convergence rates for both least squares and compressed sensing (in particular, for weighted SR-LASSO decoders; recall \eqref{eq:wSR-LASSO}). \rev{In particular, the error bounds proved in Corollaries~\ref{cor:LS} and \ref{cor:CS} are ``quasi-optimal'' in the sense that they follow the same decay rate for the best $s$-term approximation error from Corollary~\ref{cor:maincor} (with $q = 2$), i.e., $s^{1/2-1/p}$, after replacing the sparsity $s$ with the number of samples $m$ and up to log factors. Notably, these bounds show that least squares and compressed sensing can converge faster than the Monte Carlo rate $m^{-1/2}$. This is possible thanks to the holomorphic regularity of $f_v$. We start by illustrating the least squares case.}

\begin{cor}[Convergence of least squares-based surrogates]\label{cor:LS}
Let $0<\epsilon < 1$, $\varrho$ be either the uniform or Chebyshev measure on $\mathcal{U}=[-1,1]^d$, $m \geq 3$ and $\bm{y}_1, \dots , \bm{y}_m $ independent and identically distributed drawn from the probability distribution $ \varrho$.  Let the map $f_v:\mathcal{U}=[-1,1]^d\to\C$ be defined by $f_v( \bm{y} )= u_v(T, \bm{y} )$, where $\bm{u}$ is the solution of equation \eqref{eq:ted} with the \rev{parametric} diffusivity \rev{matrix} $\bm{C}(t,\bm{y})$ satisfying Assumption \ref{ass:diffusion_matrix} where $\mathcal{E}_{\bm{\rho}}\subset\mathring{\mathcal{K}}$ for some $\bm{\rho}>\bm{1}$; let $\{\psi_{\bm{\nu}}\}$ be either the Chebyshev or Legendre basis. Then, there exists a set $S \subset \mathbb{N}_0^d$ of cardinality $|S| \leq \lceil m/\log(m/\epsilon) \rceil$ such that the following holds with probability at least $1 - \epsilon$.
    For any $\bm{n} \in \mathbb{C}^m$, the approximation $\hat{f_v}= \sum_{\bm{\nu} \in S} \hat{c}_{\bm{\nu}} \psi_{\bm{\nu}}$ that solves the least squares problem \eqref{eq:ls}
is unique and satisfies, for every $0< p < 1$,
    \begin{equation*}
        \| f_v - \hat{f_v} \|_{L_{\varrho}^2(\mathcal{U})} \leq \rev{B(\bm{u}_0, T, \tilde{\bm{M}}, \mathcal{E}_{\bm{\rho}}) \cdot }C(\bm{\rho}, p) \cdot (m/\log (m/\epsilon))^{\frac{1}{2}-\frac{1}{p}} + 2 \cdot \|\bm{n} \|_{\infty},
    \end{equation*}
\rev{where $B(\bm{u}_0, T, \tilde{\bm{M}}, \mathcal{E}_{\bm{\rho}})$ is as in \eqref{eq:bound_term}.}

\end{cor}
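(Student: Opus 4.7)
My plan is to combine the holomorphic regularity of the parametric solution map, established in Theorem \ref{thm:mainthm}, with a general convergence theorem for Monte Carlo least-squares approximation of holomorphic functions (such as \cite[Theorem 6.1]{adcock2022monte} or \cite[Theorem 5.3]{sparsepoly}), and then invoke the best $s$-term decay rate from Corollary \ref{cor:maincor}. In essence, this corollary should be a specialization of a known abstract theorem to the parametric diffusion-on-graph setting.

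First, I would invoke Theorem \ref{thm:mainthm} with $\mathcal{H}=\mathcal{E}_{\bm{\rho}}$, a valid choice since $\mathcal{E}_{\bm{\rho}}$ is compact and $\mathcal{E}_{\bm{\rho}}\subset\mathring{\mathcal{K}}$ by hypothesis. This produces a holomorphic extension of $f_v$ to a neighborhood of $\mathcal{E}_{\bm{\rho}}$ together with the uniform bound $\|\tilde f_v\|_{L^\infty(\mathcal{E}_{\bm{\rho}})}\leq B(\bm{u}_0,T,\bm{M})<\infty$. This places $f_v$ in the class of Bernstein-polyellipse-holomorphic functions for which sharp sparse polynomial recovery theory applies off-the-shelf.

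Next, I would apply the abstract convergence theorem for least-squares approximation on a data-adapted polynomial subspace. For $m$ i.i.d.\ samples drawn from $\varrho$ (either the uniform or Chebyshev measure), this theorem guarantees, with probability at least $1-\epsilon$, the existence of an index set $S\subset\mathbb{N}_0^d$ of cardinality $|S|\leq\lceil m/\log(m/\epsilon)\rceil$ such that the least-squares problem \eqref{eq:ls} restricted to $S$ admits a unique solution $\hat f_v$ satisfying
$$
\|f_v - \hat f_v\|_{L^2_\varrho(\mathcal{U})} \leq C_1 \cdot \sigma_s(\bm{c})_2 + 2\cdot\|\bm{n}\|_\infty,
$$
where $\bm{c}$ is the Legendre or Chebyshev coefficient sequence of $f_v$ and $s=|S|$. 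I would then substitute the algebraic decay from Corollary \ref{cor:maincor} with $q=2$: for any $0<p<1$,
$$
\sigma_s(\bm{c})_2 \leq \frac{B(\bm{u}_0,T,\bm{M}) \cdot C(p,\bm{\rho},d)}{(s+1)^{1/p-1/2}}.
$$
Choosing $s=\lceil m/\log(m/\epsilon)\rceil$ and folding $B(\bm{u}_0,T,\bm{M})$ into a single constant depending on $\bm{\rho}$ and $p$ yields the claimed error bound.

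The main obstacle will be checking compatibility between the hypotheses of the abstract theorem and our specific setting: in particular, the admissible family of index sets (lower sets or hyperbolic crosses), the anisotropic intrinsic weights of the Legendre and Chebyshev bases under the chosen sampling measure, and tracking the explicit constants so that the $\bm{\rho}$-dependent factor from the best $s$-term bound merges cleanly with the probabilistic selection of $S$ into the single constant $C(\bm{\rho},p)$. Verifying that the noise prefactor is exactly $2$ and that $S$ can be chosen independently of $\bm{n}$ (so that the same $S$ works uniformly over noise realizations) may also require a careful reading of the cited theorem. Otherwise, the proof should be a fairly routine composition of Theorem \ref{thm:mainthm}, Corollary \ref{cor:maincor}, and the abstract least-squares result.
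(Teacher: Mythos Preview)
Your proposal is correct and follows essentially the same route as the paper: apply Theorem~\ref{thm:mainthm} with $\mathcal{H}=\mathcal{E}_{\bm{\rho}}$ to obtain the Bernstein-polyellipse holomorphy, then invoke \cite[Theorem~6.1]{adcock2022monte}. The only compatibility issue the paper actually resolves---and which you correctly anticipate as a possible obstacle---is that the cited theorem is stated for infinite-dimensional parameter domains, so one must check that for a function depending only on $d$ variables the relevant index set $S$ can be taken inside $\mathbb{N}_0^d\times\{0\}^{\mathbb{N}}$; this is handled by a short orthogonality computation showing $c_{\bm{\nu}}=0$ whenever $\nu_k\neq 0$ for some $k>d$.
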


The proof of Corollary \ref{cor:LS}, based on \cite[Theorem~6.1]{adcock2022monte}, can be found in Appendix~\ref{app:proofCorLS}.

\vspace{10pt}

\noindent
Theorem \ref{thm:mainthm} also implies uniform and nonuniform guarantees of algebraic convergence (using Theorems 7.12 and 7.13 in \cite{sparsepoly}) for compressed sensing. In particular, we aim at finding the minimizer of the weighted SR-LASSO decoder defined in \eqref{eq:wSR-LASSO}.

\begin{cor}[Convergence of compressed sensing-based surrogates]
\label{cor:CS}
 Let the map $f_v:\mathcal{U}=[-1,1]^d\to\C$ be defined by $f_v( \bm{y} )= u_v(T, \bm{y} )$, where $\bm{u}$ is the solution to problem \eqref{eq:ted} with the \rev{parametric} diffusivity \rev{matrix} $\bm{C}(t,\bm{y})$ satisfying Assumption \ref{ass:diffusion_matrix}, where $\mathcal{E}_{\bm{\rho}}\subset\mathring{\mathcal{K}}$ for some $\bm{\rho}>\bm{1}$; let $ \bm{c} =(c_{\bm{\nu}})$ be the sequence of coefficients of $f_v$ with respect to either the Chebyshev or Legendre basis,
Let $m \in \mathbb{N}$ be the number of samples, drawing $\bm{y}_1, \dots, \bm{y}_m$ independently from the measure $\varrho$, and $0 < \epsilon < 1$. 

Let $\Tilde{L} =  \kappa \cdot \log (2m) \cdot \big [ \log (2m) \cdot \min \{ \log (m) + d, \log (2d) \cdot \log (2m) \} + \log (\epsilon^{-1}) \big ]$,
with $\kappa>0$ a universal constant, $\tilde{m} = \tilde{m}(m, d, \epsilon) = m / \Tilde{L}$ and $\Lambda = \Lambda_{n-1}^{HC}$ be the hyperbolic cross index set defined in \eqref{eq:HC} with $n = \lceil \tilde{m} \rceil$. Then the following holds with probability at least $1-\epsilon$. Any minimizer $\hat{ \bm{c} }$ of \eqref{eq:wSR-LASSO} with the intrinsic weights $ \bm{w}   = (w_{ \bm{\nu} })_{ \bm{\nu}  \in \Lambda}$ where $w_{ \bm{\nu} } = \| \psi_{ \bm{\nu} } \|_{L^{\infty} (\mathcal{U})}, \; \forall  \bm{\nu}  \in \Lambda$, and parameter $\lambda = 1/(8 \sqrt{\tilde{m}})$ satisfies, for every $0 < p < 1$,
\begin{equation}
\begin{split}
    \| f_v - \hat{f_v} \|_{L_{\varrho}^2 (\mathcal{U})} \leq & \; \kappa_1 \cdot B(\bm{u}_0, T, \rev{\tilde{\bm{M}}, \mathcal{E}_{\bm{\rho}}}) \cdot \big ( C_1 + C_2 \big ) \cdot \tilde{m}^{1/2 - 1/p} + \kappa_2 \cdot \|  \bm{n}  \|_2 / \sqrt{m}, \nonumber \\
    \| f_v - \hat{f_v} \|_{L^{\infty}(\mathcal{U})} \leq & \; \kappa_3 \cdot B(\bm{u}_0, T, \rev{\tilde{\bm{M}}, \mathcal{E}_{\bm{\rho}}}) \cdot  \big ( C_1 + C_2 \big ) \cdot \tilde{m}^{1 - 1/p} + \kappa_4 \cdot \sqrt{\tilde{m}/m} \cdot \|  \bm{n}  \|_2 ,
\end{split}
\end{equation}
where $\kappa_i>0$, $i=1,\dots, 4$ are universal constants, $\hat{f} = \sum_{\bm{\nu} \in \Lambda} \hat{c}_{ \bm{\nu} } \psi_{ \bm{\nu} }$, 
$B(\bm{u}_0, T, \rev{\tilde{\bm{M}}, \mathcal{E}_{\bm{\rho}}})$ is the constant defined in $\eqref{eq:bound_term}$, $C_1 = C( \bm{\rho} ,p,d)$ is as in \cite[Lemma~7.19]{sparsepoly} and $C_2 = C( \bm{\rho} ,p,d)$ is as in \cite[Lemma~3.13]{sparsepoly}.
\end{cor}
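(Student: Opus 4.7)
The proof proposal is to combine the holomorphy result established in Theorem~\ref{thm:mainthm} with the compressed sensing recovery theorems for holomorphic functions in \cite[Theorems~7.12 and 7.13]{sparsepoly} (which are, respectively, the $L^2_\varrho$ and $L^\infty$ nonuniform recovery guarantees for the weighted SR-LASSO under hyperbolic cross index sets). The structure parallels that of Corollary~\ref{cor:LS}, but pushed through the compressed sensing machinery instead of the Monte Carlo least squares one.

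First, I would verify the hypothesis required by \cite[Theorems~7.12 and 7.13]{sparsepoly}: namely that $f_v$ admits a holomorphic extension to a neighborhood containing the Bernstein polyellipse $\mathcal{E}_{\bm{\rho}}$, together with a uniform upper bound on this extension over $\mathcal{E}_{\bm{\rho}}$. By Assumption~\ref{ass:diffusion_matrix} we have $\mathcal{E}_{\bm{\rho}} \subset \mathring{\mathcal{K}}$, so Theorem~\ref{thm:mainthm} applies with $\mathcal{H} = \mathcal{E}_{\bm{\rho}}$. Since the norm $\|\cdot\|_2$ dominates the scalar modulus in each coordinate, we obtain the pointwise bound $\|\tilde{f}_v\|_{L^\infty(\mathcal{E}_{\bm{\rho}})} \leq B(u_0,T,\bm{M})$. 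This provides exactly the constant that appears in the $C_2$ factor coming from \cite[Lemma~3.13]{sparsepoly}, which bounds the best $s$-term approximation error in the Chebyshev/Legendre expansion in terms of $\|\tilde{f}_v\|_{L^\infty(\mathcal{E}_{\bm{\rho}})}$ and constants depending only on $\bm{\rho}$, $p$, $d$.

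Next, I would instantiate the general compressed sensing recovery theorem with the stated parameter choices: the hyperbolic cross index set $\Lambda = \Lambda_{n-1}^{\mathrm{HC}}$ with $n = \lceil \tilde{m}\rceil$, the intrinsic weights $w_{\bm{\nu}} = \|\psi_{\bm{\nu}}\|_{L^\infty(\mathcal{U})}$, and the tuning parameter $\lambda = 1/(8\sqrt{\tilde m})$. The hypotheses of \cite[Theorems~7.12 and 7.13]{sparsepoly} then deliver the nonuniform recovery bound in both $L^2_\varrho$ and $L^\infty$ norms, with probability at least $1-\epsilon$, where the truncation and noise terms appear in their standard form. The factor $C_1 = C(\bm{\rho},p,d)$ from \cite[Lemma~7.19]{sparsepoly} controls the weighted $\ell^p_{\bm{w}}$-summability of the coefficient sequence $\bm{c}$ under the holomorphy hypothesis, while $C_2$ controls the unweighted best $s$-term tail; both are multiplied by the uniform upper bound $B(u_0,T,\bm{M})$ via a direct substitution $\|\tilde{f}_v\|_{L^\infty(\mathcal{E}_{\bm{\rho}})} \leq B(u_0,T,\bm{M})$.

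The main obstacle is bookkeeping rather than conceptual: one must carefully verify that the abstract holomorphy hypothesis of \cite[Theorems~7.12 and 7.13]{sparsepoly} (formulated for scalar parameter-to-solution maps admitting extensions to Bernstein polyellipses with a prescribed $L^\infty$ bound) is matched precisely by the conclusion of Theorem~\ref{thm:mainthm} applied to $f_v$, and that the constants $C_1$, $C_2$ and the universal constants $\kappa_i$ carry through the reduction exactly as stated. Once this alignment is confirmed, the bounds on $\|f_v - \hat{f}_v\|_{L^2_\varrho(\mathcal{U})}$ and $\|f_v - \hat{f}_v\|_{L^\infty(\mathcal{U})}$ follow immediately from the corresponding theorems in \cite{sparsepoly}, with the explicit dependence on $B(u_0,T,\bm{\rho},\bm{M})$ replacing the generic uniform bound on the holomorphic extension.
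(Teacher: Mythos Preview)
Your approach is essentially the same as the paper's: verify the holomorphy hypothesis required by \cite{sparsepoly} via Theorem~\ref{thm:mainthm} with $\mathcal{H}=\mathcal{E}_{\bm{\rho}}$, then invoke the compressed sensing recovery theorem directly. One small correction: the paper relies only on \cite[Theorem~7.12]{sparsepoly}, which already delivers both the $L^2_\varrho$ and $L^\infty$ bounds in the nonuniform setting; \cite[Theorem~7.13]{sparsepoly} is the \emph{uniform} recovery analogue (mentioned separately in Remark~\ref{rem:uniform_guarantees}), not the source of the $L^\infty$ estimate here.
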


\begin{proof}
    The proof follow directly from  \cite[Theorem~7.12]{sparsepoly} and Theorem \ref{thm:mainthm}. Indeed, Theorem \cite[Theorem~7.12]{sparsepoly} holds when \cite[Assumption~2.3]{sparsepoly} holds. Now, we consider Theorem \ref{thm:mainthm} with the following setting: 
     $\mathcal{H} = \mathcal{E}_{\bm{\rho}}$, 
     $\mathcal{K} = \mathcal{E}_{\bm{\rho}'}$, where $\bm{\rho}' > \bm{\rho}$ (strict inequality applies to every component), and,
     consequently, $\mathcal{O} = \mathring{\mathcal{K}}$.
    Therefore, \cite[Assumption~2.3]{sparsepoly} holds and so does \cite[Theorem~7.12]{sparsepoly}. 
    \end{proof}

We conclude by discussing a few remarks on Corollaries~\ref{cor:LS} and \ref{cor:CS}.
    
    \begin{remark}[Replacing SR-LASSO with QBCP]\label{rem:wQCBP_extension}
    Corollary~\ref{cor:CS} also holds for weighted QCBP (see \eqref{eq:wqcbp}). Indeed, \cite[Theorem~7.12]{sparsepoly} is formulated for weighted SR-LASSO, an inspection of its proof reveals that it also holds for weighted QCBP, up to replacing $\|\bm{n}\|_2$ with the QCBP tuning parameter $\eta$ (see also \cite[Theorem~7.4]{sparsepoly}, which the proof of \cite[Theorem~7.12]{sparsepoly} crucially relies on).
 \end{remark}

\begin{remark}[From algebraic to exponential rates]\label{rem:uniform_guarantees}
    Analogous guarantees of algebraic convergence for uniform recovery, and with exponential decay rates, hold as well. To obtain them it is enough to combine Theorem \ref{thm:mainthm} with \cite[Theorem~7.13]{sparsepoly} and \cite[Theorem~7.14]{sparsepoly} respectively. It is worth observing that the corresponding exponential rates are not dimension independent, though.
\end{remark}

\begin{remark}[From scalar- to vector-valued solution maps]\label{rem:vector_valued_f}
Corollaries \ref{cor:LS} and \ref{cor:CS} provide convergence guarantees for the construction of surrogate models to the solution map $\bm{y}\mapsto u_v(T,\bm{C}(T, \bm{y}))$ defined in equation \eqref{eq:goal}.   Nevertheless, these results could be extended to the whole solution map instead of its restriction to a single node, invoking results from least squares and compressed sensing for vector-valued complex functions. Indeed, in the least squares setting, an analogous version of Corollary \ref{cor:LS} holds for $\bm{y}\mapsto \bm{u}(T,\bm{C}(T, \bm{y}))$ up to replacing the $\ell^2$ norm with the Frobenius norm, yielding the following new problem:
\begin{equation}\label{eq:LS_vv}
     \min \limits_{ \bm{Z} \in \mathbb{C}^{N\times |V| }} \|  \boldsymbol{\Psi}  \bm{Z}  - \bm{B}  \|_F^2.
\end{equation}
On the other hand, the compressed sensing scalar-valued SR-LASSO scheme can be extended to the  following minimization problem:
\begin{equation}\label{eq:SR-LASSO_vv}
    \min \limits_{ \bm{Z} \in \mathbb{C}^{N\times |V| }} \lambda \|  \bm{Z}  \|_{1,1, \bm{w} } + \|  \boldsymbol{\Psi}  \bm{Z}  - \bm{B}  \|_F,
\end{equation}
which falls under the category of vector-valued compressed sensing (and more generally, compressed sensing for Hilbert-valued functions); see also \cite[Section~8.2.5]{sparsepoly}.
\end{remark}

\section{Numerical experiments}\label{sec:numerics}

\noindent In this section, we evaluate the efficacy of least squares and compressed sensing for approximating the (real-valued) parametric diffusion map $f_v(\bm{y}) = u_v(T, \bm{C}(\bm{y}))$ defined in \eqref{eq:goal}. The primary objective of these experiments is to assess the accuracy of the method across various graph structures, sizes, and diffusion settings. As a performance metric, we use the \emph{Root Mean Square Error (RMSE)}, defined by
$
\textnormal{RMSE} = \sqrt{\sum_{i=1}^{m_{\text{test}}}\frac{1}{m_{\text{test}}}(f_v(\boldsymbol{y}_i^{\text{test}}) - \hat{f_v}(\boldsymbol{y}_i^{\text{test}}))^2},
$
where $f_v(\boldsymbol{y}_i^{\text{test}})$ is the actual value and $\hat{f_v}(\boldsymbol{y}_i^{\text{test}})$ is the predicted value. In each of the following experiments, the test set will consist of 1000 pairs $(\bm{y}_i^{\text{test}},f(\bm{y}_i^{\text{test}}))$ where the $\bm{y}_i$'s are randomly sampled from the uniform distribution on $\mathcal{U}=[-1,1]^d$.

In Section \ref{subsec:synthetic_graphs} we first present our experiments on synthetic random graphs, more specifically, generated using the SBM, in order to produce graphs with a community structure. \rev{We then increase the complexity of the case study by focusing on a time-and-edge-dependent diffusion problem in Section \ref{subsec:time-dep_exp}.} In Section \ref{subsec:real-life} we replicate some of the same experiments \rev{of Section \ref{subsec:synthetic_graphs}} on real-world graph-structured datasets from Twitter and Facebook. \rev{Finally, in Section \ref{subsec:comp_cost} we carry out a computational cost analysis of the proposed surrogates compared to full-order simulations.}
\noindent We implement the proposed method in Python using the \texttt{NetworkX} library for graph generation and manipulation \cite{hagberg2008exploring}, the \texttt{NumPy} library for matrix operations to solve the diffusion equation, and the \texttt{equadratures} \cite{equad} and \texttt{SciPy} libraries for the polynomial approximation. The Python code is available at \url{https://github.com/k-yoan/surrogate_graph_diffusion}.

The setup for the diffusion process will be consistent throughout all experiments: given a diffusion parameter $\boldsymbol{y}\in\mathcal{U}$, we compute the solution $u_v(T)\in \R$ to the edge-dependent diffusion problem \eqref{eq:goal} where the node $v=2$ and for time $T=1$. To obtain the coefficients of the polynomial approximation, we use least-squares and (weighted) QCBP. 

\rev{We conclude with a general remark regarding numerical comparisons between least squares and compressed sensing. While this section includes several experiments where approximations obtained using these two techniques are compared directly, it is important to recall that these two methods are designed to be used in different sampling regimes. Indeed, on the one hand least squares is only able to provide meaningful approximations in the oversampled regime. On the other hand, compressed sensing works in the undersampled regime, but it becomes suboptimal in the oversamples regime due to the presence of bias induced by sparse regularization. Therefore, the numerical experiments where both techniques are applied in different regimes should not be thought of as an algorithmic comparison aimed at establishing a ``winner'', but rather as qualitative illustrations  showcasing the behavior of these methods in different scenarios.}

\subsection{Synthetic graphs \rev{(edge-dependent case)}}\label{subsec:synthetic_graphs}

\rev{In this section we consider an edge-dependent diffusion, which is a special case of the time-and-edge-dependent diffusion case described by equation \eqref{eq:Ct} in Example~\ref{example:diffusion_matrix}, where $h_j(t)\equiv 1$ for $j \in [d]$. Namely, the matrix $ \bm{C}$ in equation \eqref{eq:ed} is of the form}
\begin{equation}
\label{eq:C_exp}
\rev{\bm{C}(t, \bm{y}) =
\begin{pmatrix}
    \frac{y_{\sigma(1,1)}+1}{2}\cdot \mathds{1} & \cdots & \frac{y_{\sigma(1,K)}+1}{2}\cdot \mathds{1}\\
    \vdots & \ddots & \vdots\\
    \frac{y_{\sigma(K,1)}+1}{2}\cdot \mathds{1} & \cdots & \frac{y_{\sigma(K,K)}+1}{2}\cdot \mathds{1}\\
\end{pmatrix}.}
\end{equation}
For simplicity, we generate graphs from the SBM such that one graph always has the same number of nodes per community, and the edge probabilities $p_{i,j}$ are always set to $1$ (when nodes $i$ and $j$ belong to the same community) or $0.04$ (when nodes $i$ and $j$ are in different communities).
In the subsequent sections, each experiment is repeated 20 times. The geometric mean of RMSE obtained from those rounds is shown in bold colored lines/curves, while the variance is shown with a shaded region around the mean, corresponding to the geometric standard deviation of the RMSE.

\rev{For the moment, we will focus on time-independent problems. This choice is made mainly for computational simplicity. In fact, our numerical experiments involve statistical simulations based on Monte Carlo-type experiments, where independent experiments are run multiple times. In the time independent case, computing samples is very fast since we rely on an explicit forumla that depends on the eigendecomposition of the graph Laplacian $\bm{L}$. On the other hand, in the case of time-dependent problems samples are computed via a much slower numerical integration. Specifically, although Sections~\ref{subsec:PoC}--\ref{subsec:number_of_nodes} focus on time-independent problems, Sections~\ref{subsec:time-dep_exp}--\ref{subsec:comp_cost} will include time-dependent problems.}

\subsubsection{Proof of concept}\label{subsec:PoC}

\rev{In order to numerically validate our theoretical findings, we start by studying the decay and summability properties of the polynomial coefficients of the parametric diffusion map $f_v(\bm{y})$. In particular, we compute its polynomial coefficients in a total degree set of order $n=10$ using least squares with a large number of samples (namely, $m=10000$) to guarantee very high accuracy. Then, we approximate $\sigma_s(\bm{c})_2$ of Definition~\ref{def:best_s-term} with the best $s$-term approximation error of the computed coefficient vector. The results are shown in Figure~\ref{fig:coeff_best_s-term}. We can see that the sorted absolute coefficients (Figure~\ref{fig:coeff_best_s-term}, left) and the best $s$-term approximation error exhibit a super-algebraic decay. This is due to the holomorphic regularity of the map, proved in Theorem~\ref{thm:mainthm}, and is a direct consequence of \cite[Theorem~3.2]{sparsepoly} and Corollary~\ref{cor:maincor}, respectively.}

\begin{figure}[t]
    \centering
    \includegraphics[width=0.3\linewidth]{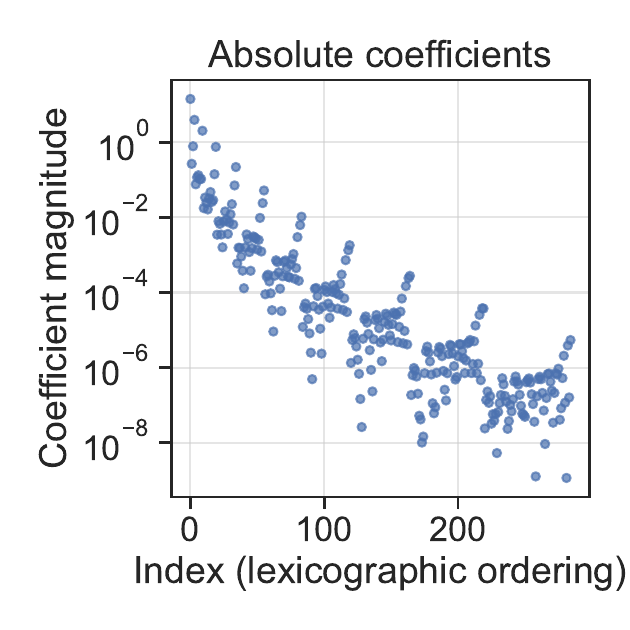} \includegraphics[width=0.3\linewidth]{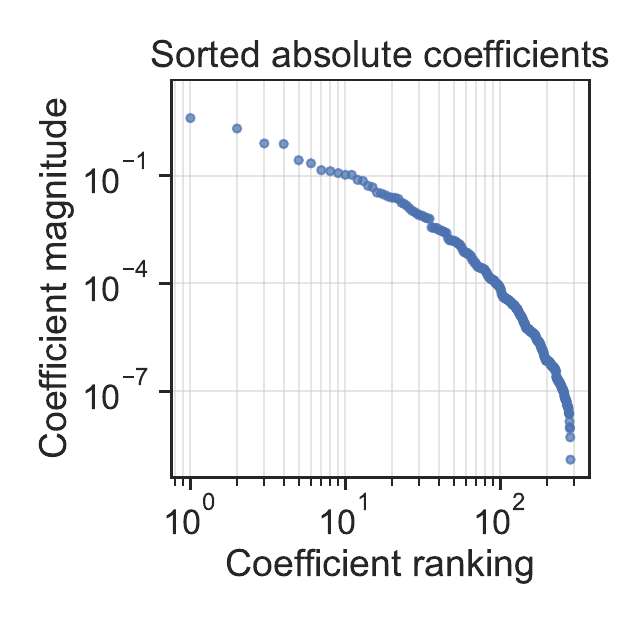}
    \includegraphics[width=0.3\linewidth]{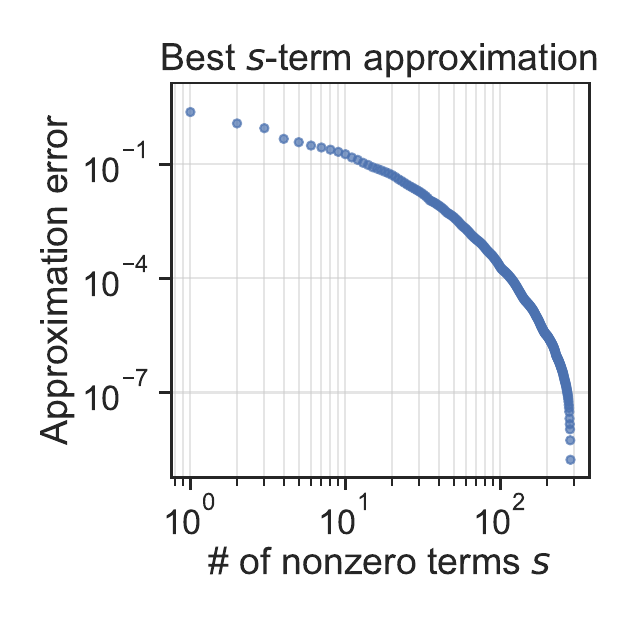}

    \caption{\rev{Absolute polynomial coefficients of the parametric diffusion map $f_y(\bm{y})$ (left: unsorted, center: sorted) and best $s$-term approximation error (right).}}
    \label{fig:coeff_best_s-term}
\end{figure}

\rev{Now}, we evaluate the accuracy of least squares and compressed sensing over $m$ sample points, for different dimensions and multi-index sets (total degree and hyperbolic cross, defined in \eqref{eq:TD} and \eqref{eq:HC}, respectively). We choose the orders $n$ such that the cardinality of the multi-index sets is around the same for both bases ($n=8$ for the total degree set and $n=20$ for the hyperbolic cross). Figure \ref{fig:rmse-vs-m-d3} shows that when $d=3$, we attain a high level of accuracy. 
\begin{figure}[t]
    \centering
    \includegraphics[width=0.45\linewidth]{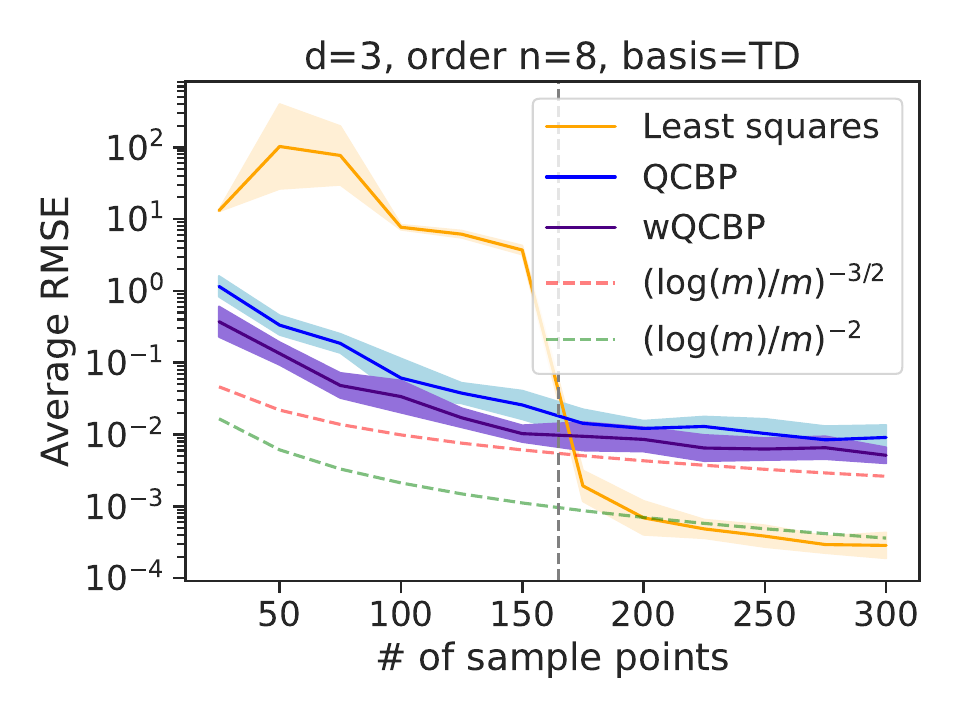} \includegraphics[width=0.45\linewidth]{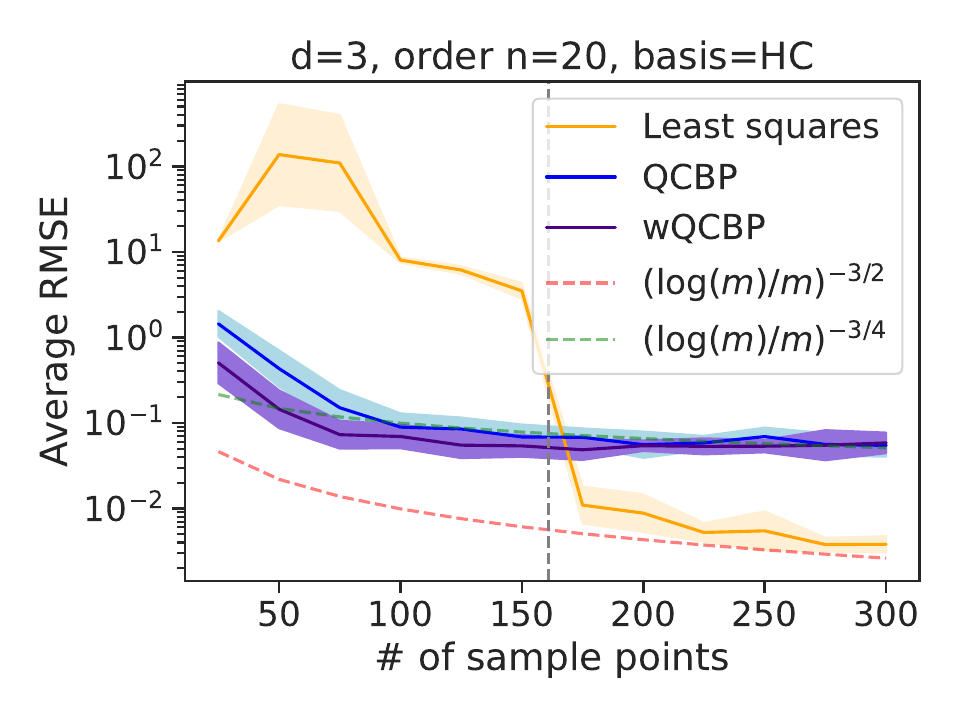}

    \caption{Convergence plot of average RMSE vs. number of sample points for $d=3$. \rev{The green and red dashed lines highlight the order of convergence of the methods. The vertical black dashed line represents the transition from the underdetermined to the overdetermined regime, i.e., $m=N$.} }
    \label{fig:rmse-vs-m-d3}
\end{figure}
Another thing to note is that when the number of sample points $m$ is less than the cardinality of the basis (represented by the vertical grey dashed line on both plots), QCBP and weighted QCBP yield better results than least squares. This is expected, as least squares only works when the system is overdetermined: as we increase the number of sample points, the average RMSE given by the least squares method decreases and starts performing better than the other two methods. \rev{In addition, we compare the observed convergence plots with curves of the form $(\log(m)/m)^{1/2-1/p}$ for different values of $p$ (see Corollaries~\ref{cor:LS} and \ref{cor:CS}). In all cases we find that the observed converges qualitatively exhibits this behavior for some value of $0< p< 1$. This illustrates that it is possible to numerically observe convergence rates of the form predicted by Corollaries~\ref{cor:LS} and \ref{cor:CS} (modulo a simpler log factor in the latter case), which are faster than the Monte Carlo rate $m^{-1/2}$.}

Figure \ref{fig:rmse-vs-m-d6} shows similar results with $d=6$. 
\begin{figure}[t]
    \centering
    \includegraphics[width=0.45\linewidth]{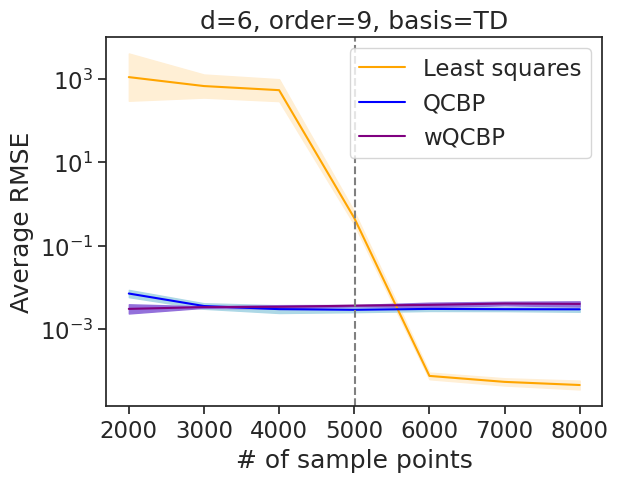}
\includegraphics[width=0.45\linewidth]{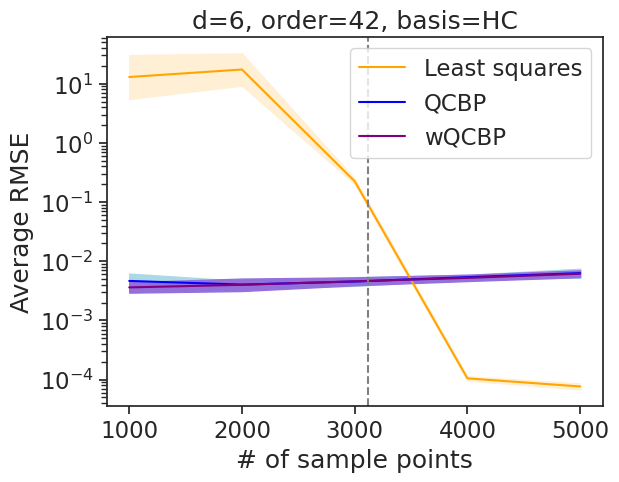}
    \caption{Convergence plot of average RMSE vs. number of sample points for $d=6$.}
    \label{fig:rmse-vs-m-d6}
\end{figure}
The way in which hyperparameters such as the multi-index set $\Lambda$, the dimension $d$ and the number of nodes $|V|$ affect the results is explored in the following subsections.

\subsubsection{Impact of cardinality of basis on accuracy}

\noindent In the previous experiments, we demonstrated the ability of least squares and compressed sensing to find accurate approximations to the solution of parametric diffusion problems, by evaluating the error as a function of the number of sample points. However, the accuracy of the method is influenced by several factors beyond just the number of sample points. One such factor is the cardinality of the multi-index set, which is influenced by the type of basis we choose, but also the order $n$ of that basis. Thus, we also explore how varying the size of the multi-index set affects the overall accuracy of our polynomial approximation.

In Figure \ref{fig:rmse-vs-card-m350}, we fix the dimension $d=3$ and the number of sample points $m=350$, for the total degree set and the hyperbolic cross. For both bases, we observe the same pattern: as we increase the cardinality of the multi-index set (i.e., the number of coefficients in the polynomial expansion), the average RMSE lowers drastically. The same experiment is replicated in Figure \ref{fig:rmse-vs-card-m1000} with $m=1000$, where we note the same correlation between the error and the size of the basis. The only major difference between the two experiments lies with the least-squares method: in Figure \ref{fig:rmse-vs-card-m350} (when $m=350$), the error increases sharply as the cardinality becomes large, whereas we only see a downward trend in Figure \ref{fig:rmse-vs-card-m1000}. 
\begin{figure}[t]
    \centering
    \includegraphics[width=0.45\linewidth]{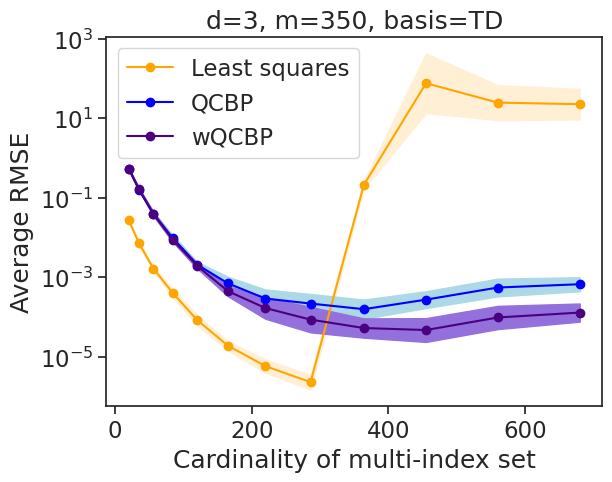}
\includegraphics[width=0.45\linewidth]{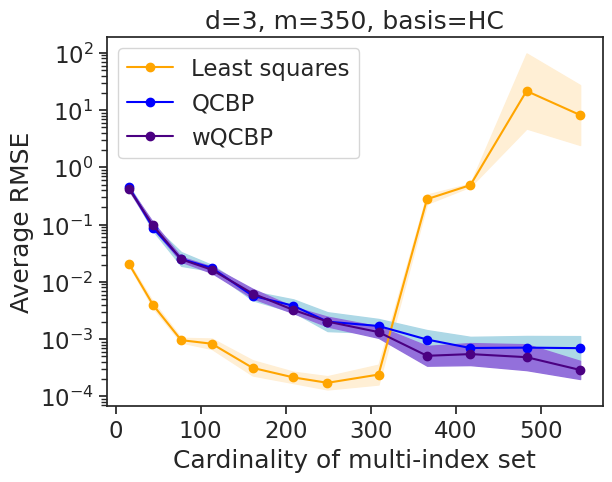}
    \caption{Convergence plot of average RMSE vs. cardinality of multi-index set for a fixed number of sample points $m=350$.}
    \label{fig:rmse-vs-card-m350}
\end{figure}
This is once again expected, as the least-squares method does not perform well when the cardinality of the set is greater than the number of sample points $m$.  
\begin{figure}[t]
    \centering
    \includegraphics[width=0.45\linewidth]{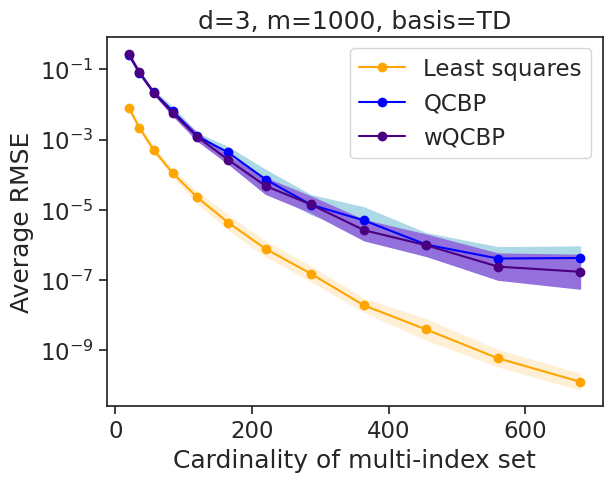}
\includegraphics[width=0.45\linewidth]{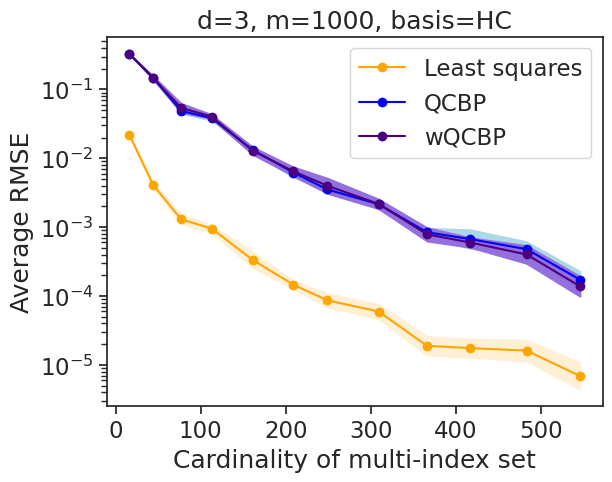}
    \caption{Convergence plot of average RMSE vs. cardinality of multi-index set for a fixed number of sample points $m=1000$.}
    \label{fig:rmse-vs-card-m1000}
\end{figure}


\subsubsection{Impact of dimension on a fixed size graph}

\noindent Next, we investigate the effect of varying the dimension $d$ (influenced by the number of communities $K$) on the accuracy of our polynomial approximation. We focus on a fixed-sized graph with 24 nodes, controlling for other factors such as the cardinality of the multi-index set, which was addressed in the previous subsection. Given the difficulty in finding an order $n$ that results in exactly the same cardinality for one basis across different dimensions, we instead chose orders that give the closest possible match in cardinality.

In the experiments, we test for both the total degree set and the hyperbolic cross. For $d=3$, $d=6$ and $d=10$, the cardinalities are 2925, 3003 and 3003, respectively, for the total degree. The results are shown in the left plots of Figures \ref{fig:24nodes-ls}, \ref{fig:temp2} and \ref{fig:temp3}.
\begin{figure}[t]
    \centering
    \includegraphics[width=0.45\linewidth]{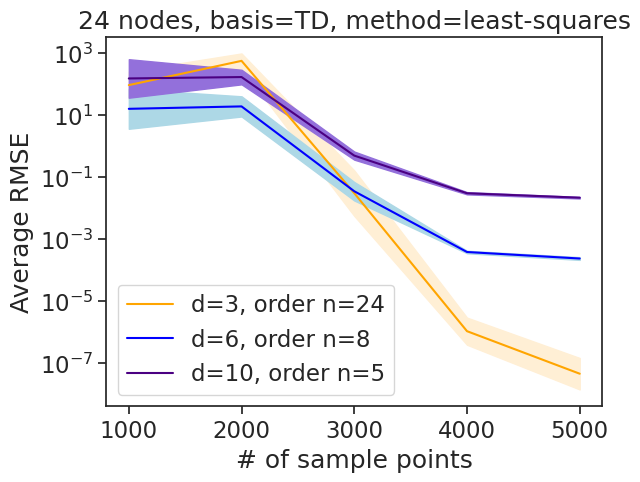}
    \includegraphics[width=0.45\linewidth]{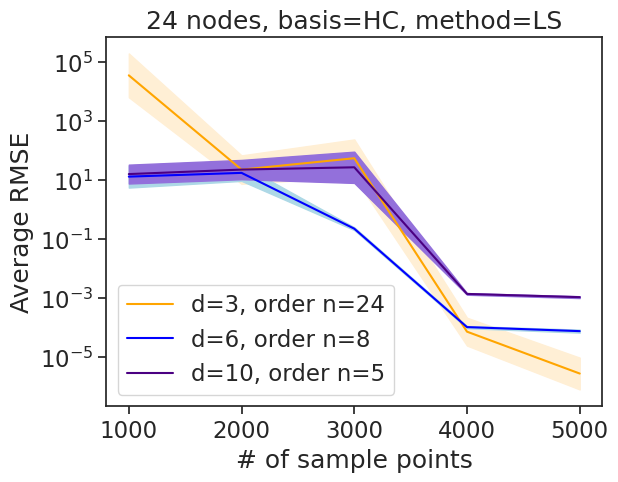}
    \caption{Convergence plots of average RMSE vs. number of sample points for a 24-node graph with varying number of communities (least squares).}
    \label{fig:24nodes-ls}
\end{figure}
\begin{figure}[t]
    \centering
    \includegraphics[width=0.45\linewidth]{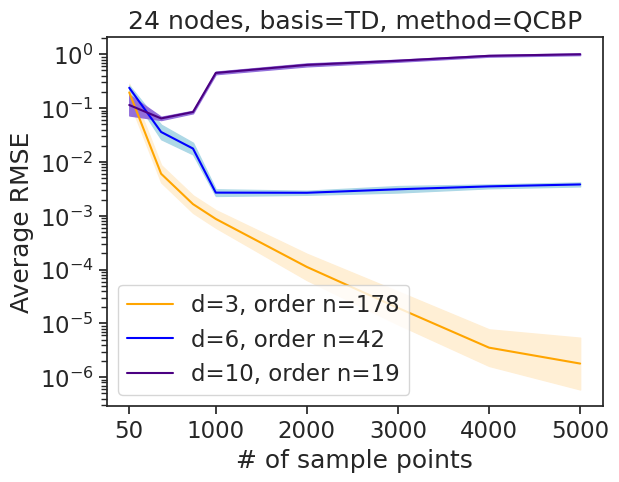}
    \includegraphics[width=0.45\linewidth]{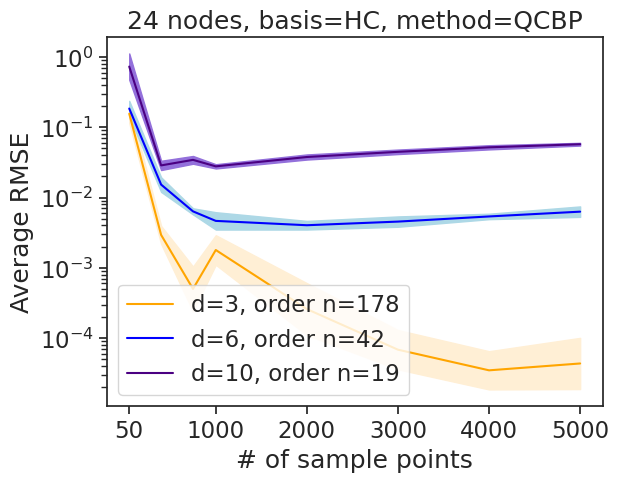}
    \caption{Convergence plots of average RMSE vs. number of sample points for a 24-node graph with varying number of communities (QCBP).}
    \label{fig:temp2}
\end{figure}
\begin{figure}[t]
    \centering
    \includegraphics[width=0.45\linewidth]{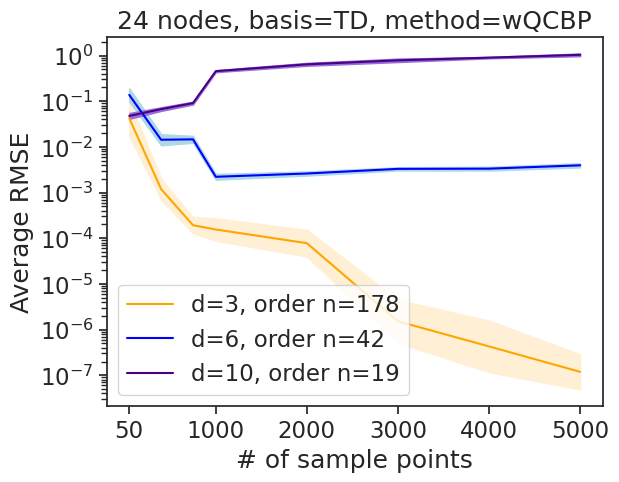}
    \includegraphics[width=0.45\linewidth]{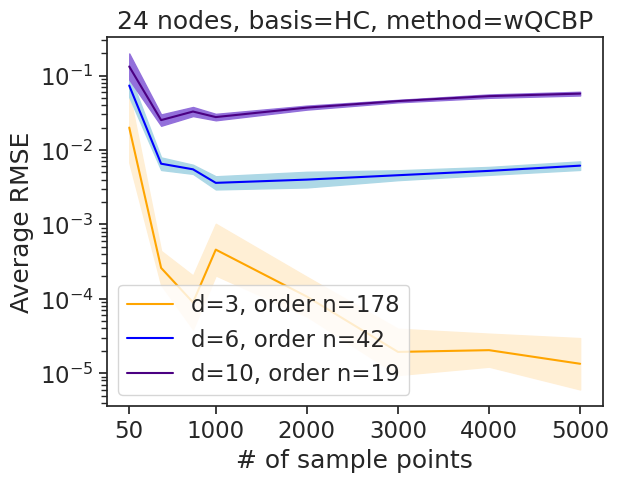}
    \caption{Convergence plots of average RMSE vs. number of sample points for a 24-node graph with varying number of communities (weighted QCBP).}
    \label{fig:temp3}
\end{figure}
Similarly, for the hyperbolic cross, the cardinalities are 3143, 3119 and 3076 (for $d=3$, $d=6$ and $d=10$ respectively). The results are shown in the right plots of Figures \ref{fig:24nodes-ls}, \ref{fig:temp2} and \ref{fig:temp3}. The overall pattern we observe from the three figures is that a lower dimension leads to a better accuracy.

A notable observation is that for higher dimensions, the hyperbolic cross seems to be better suited than the total degree. While the difference is not clearly evident for the least squares case in Figure \ref{fig:24nodes-ls}, the QCBP and weighted QCBP cases illustrated in Figures \ref{fig:temp2} and \ref{fig:temp3} show that for $d=10$ specifically, the results obtained with the hyperbolic cross are always superior to those with the total degree.

\subsubsection{Impact of number of nodes per community for fixed dimension}
\label{subsec:number_of_nodes}

\noindent In this subsection, we explore how the size $|V|$ of the graph influences the accuracy of our polynomial approximation. To isolate the effect of graph size, we keep the dimension $d$ fixed, by ensuring that the number of communities $K$ remains constant across experiments (we choose $K=2$, hence $d=3$). In this case, since the number of nodes per community is the factor that determines the overall graph size, we vary this quantity while maintaining a consistent configuration in terms of community structure. By adjusting the size of the graph in this way, we aim to understand how an increasing number of nodes within each community impacts the accuracy of polynomial approximation, helping determine how scalability influences its performance.

We once again plot the average RMSE against the number of sample points, for graphs with 5, 10 and 15 nodes per community. We only show results for $d=3$, with the total degree set as a basis and the least-squares method, as we would see a similar pattern for other multi-index sets and other methods. In Figure~\ref{fig:diff-size-graphs}, we can see that polynomial approximation yields better results when there are less nodes per community (i.e., when the graph is smaller). 
\begin{figure}[t]
    \centering
    \includegraphics[width=0.45\linewidth]{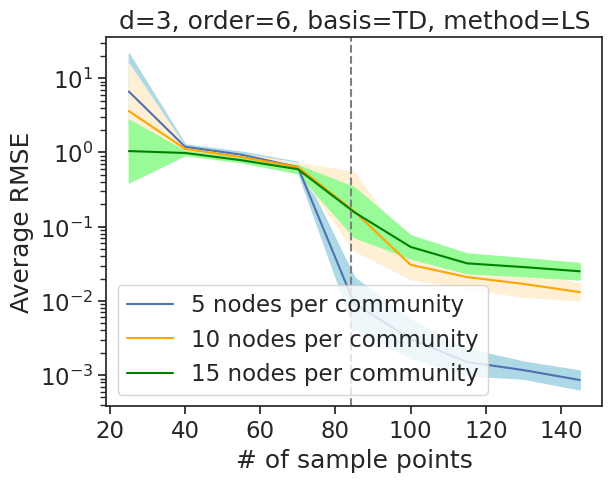}
    \includegraphics[width=0.45\linewidth]{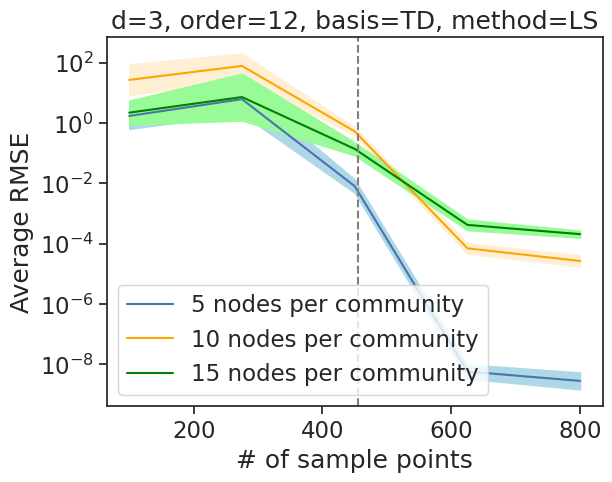}
    \caption{Convergence plots of average RMSE vs. number of sample points for graphs sampled from SBM with varying number of nodes (least squares).}
    \label{fig:diff-size-graphs}
\end{figure}
As we increase the number of sample points, the difference in performance seems to be more pronounced. The experiment was reproduced twice (with orders $n=6$ and $n=12$) to show that we can still achieve very good accuracy for larger graphs by simply increasing other hyperparameters, such as the order $n$.

\subsection{\rev{Synthetic graphs (time-and-edge-dependent case)}}
\label{subsec:time-dep_exp}

\rev{The aim of this section is to assess the performance of sparse polynomial surrogates  over a time-dependent version of the numerical study of Section~\ref{subsec:PoC}, reproducing the experiments in Figures~\ref{fig:coeff_best_s-term} and \ref{fig:rmse-vs-m-d3} in the time-and-edge-dependent case. In particular, we consider a diffusivity coefficient matrix $\mathbf{C}(t, \bm{y})$ as in equation~\eqref{eq:Ct} where 
\begin{align*}
    & h_{\sigma(1,1)}(t) = h_1(t) := \frac{1}{2}\left(1+\frac{t}{T}\right), \\
    & h_{\sigma(1,2)}(t) = h_{\sigma(2,1)}(t) = h_2(t) := \frac{1}{3}(2 + \cos (t)),\\
    & h_{\sigma(2,2)}(t) = h_3(t) := \frac{1}{3}(2 + \sin (t)).
\end{align*} 
These functions have been chosen in such a way to take strictly positive value less than or equal to $1$ for $t \in [0,T]$. 

We start by studying the polynomial coefficients and the best $s$-term approximation of the function $f_v(\bm{y})$. Figure~\ref{fig:time_coeff_best_s-term} shows the results of the same experiment as in Figure~\ref{fig:coeff_best_s-term} for the time-and-edge-dependent problem considered. Analogously to the edge-dependent case, we observe a clear super-algebraic decay of the sorted coefficients and of the best $s$-term approximation error. This numerically validates the holomorphic regularity of the underlying function established by Theorem~\ref{thm:mainthm}. 
In addition, we conduct the same experiment as in Figure~\ref{fig:rmse-vs-m-d3} in the time-and-edge-dependent case to study the convergence of the proposed surrogate models by plotting the average RMSE as a function of the number of samples $m$. The results, reported in Figure~\ref{fig:time_rmse-vs-m-d3}, show a convergence behavior very similar to that observed in the edge-dependent case.}
\begin{figure}[t]
    \centering
    \includegraphics[width=0.3\linewidth]{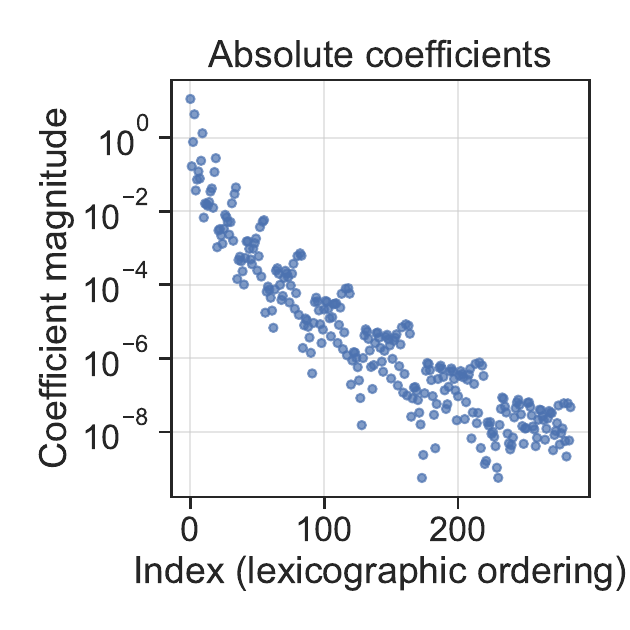} \includegraphics[width=0.3\linewidth]{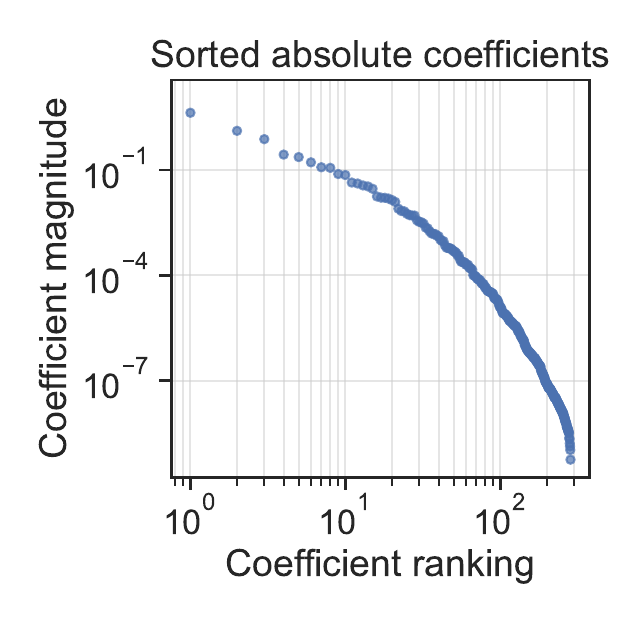}
    \includegraphics[width=0.3\linewidth]{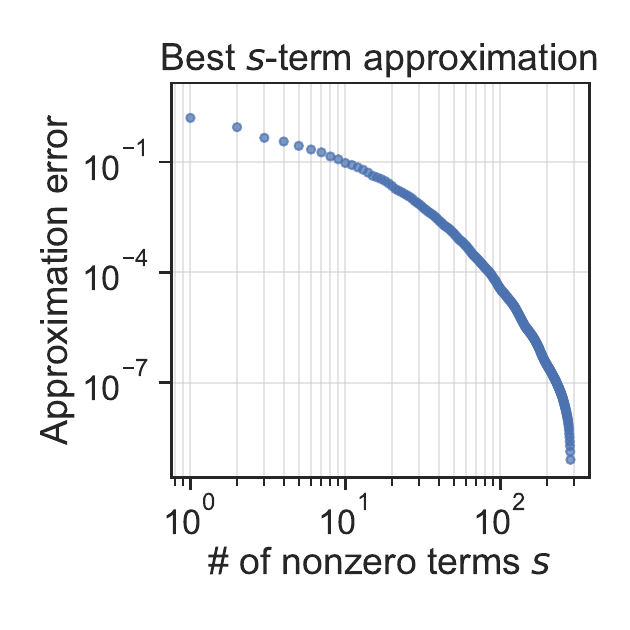}
    \caption{\rev{Time-and-edge-dependent case. Absolute polynomial coefficients of the parametric map $f_v(\bm{y})$ (left: unsorted, center: sorted) and best $s$-term approximation error (right).}}
    \label{fig:time_coeff_best_s-term}
\end{figure}
\begin{figure}[t]
    \centering
    \includegraphics[width=0.45\linewidth]{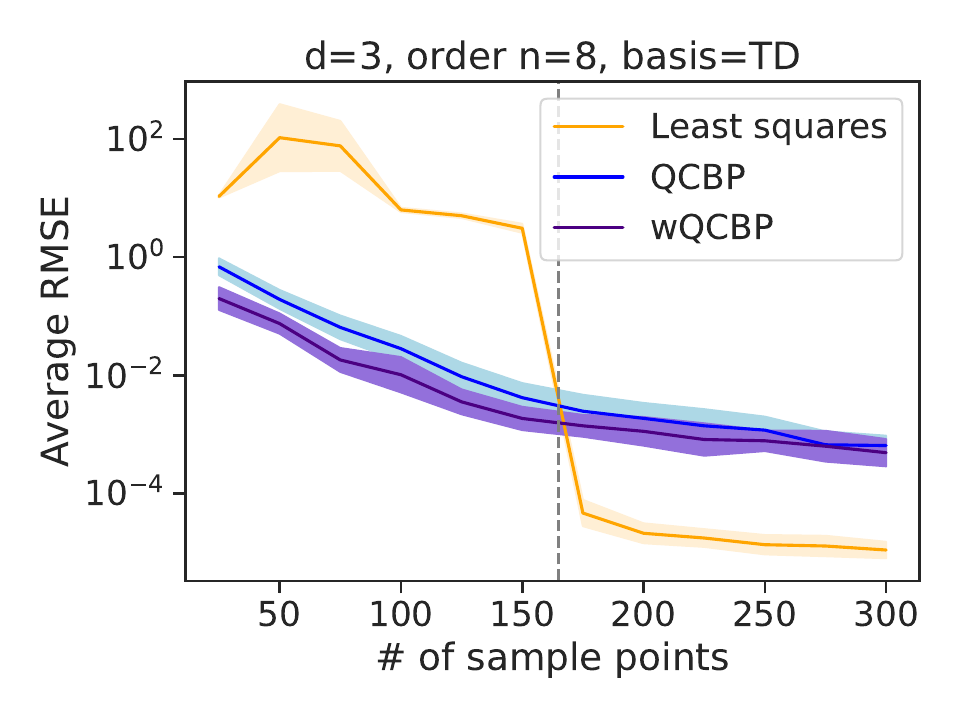} \includegraphics[width=0.45\linewidth]{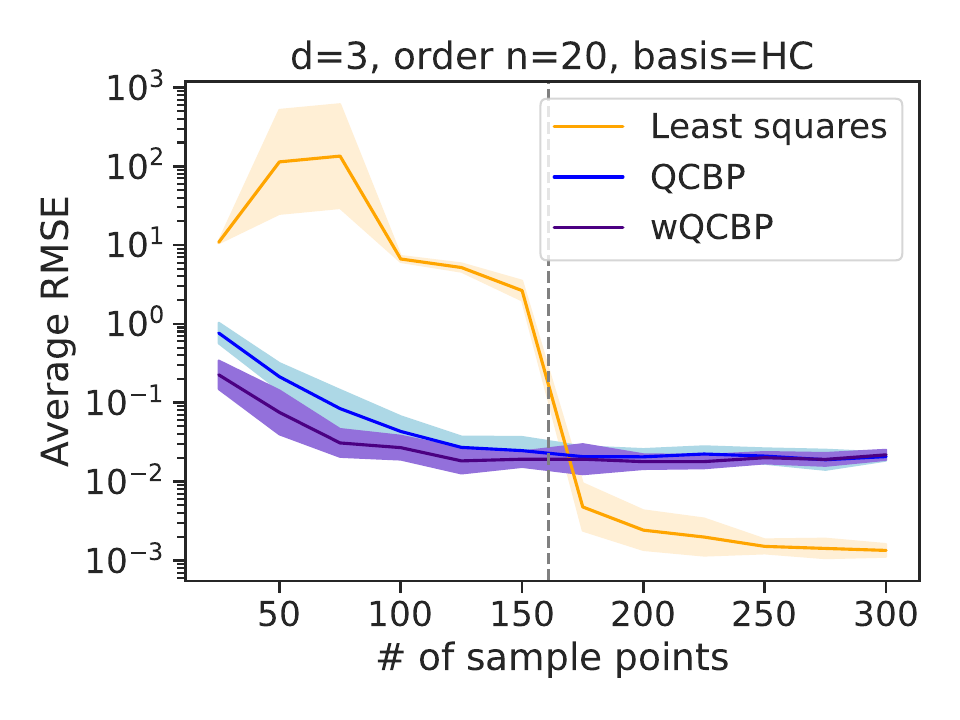}
    \caption{\rev{Time-and-edge-dependent case. Convergence plot of average RMSE vs.\ number of sample points for $d=3$.}}
    \label{fig:time_rmse-vs-m-d3}
\end{figure}
\rev{In summary, a comparison of the results in Figures~\ref{fig:time_coeff_best_s-term} and \ref{fig:time_rmse-vs-m-d3} with those in Figures~\ref{fig:coeff_best_s-term} and \ref{fig:rmse-vs-m-d3} shows that the introduction of a time-dependent diffusivity matrix does not numerically affect the performance of the surrogate method in the example considered.}

\subsection{Validation on real-world examples}\label{subsec:real-life}
\noindent To validate our theoretical results on real-world examples, we approximate $u_2$ (the solution of the diffusion equation at node 2) over datasets retrieved from Twitter and from Facebook, respectively. 
We find communities in each network using the fluid community detection algorithm \cite{pares2018fluid}.

\subsubsection{The Twitter dataset}
\label{subsec:twitter}
\noindent The Twitter network \cite{fink2023twitter},\cite{snapnets} represents the Twitter interaction network for the 117th United States Congress, both House of Representatives and Senate. The base data was collected via the Twitter’s API, then the empirical transmission probabilities (represented by weighted edges) were quantified according to the fraction of times one member retweeted, quote tweeted, replied to, or mentioned another member’s tweet.  
The number of nodes of the graph is $|V|=475$ and the number of edges is $|E| = 13,289$. We detected 2 communities exploiting the fluid community detection algorithm, which contain, respectively, 110 and 365 nodes. 
\begin{figure}[ht]
    \centering
    \includegraphics[height=5cm]{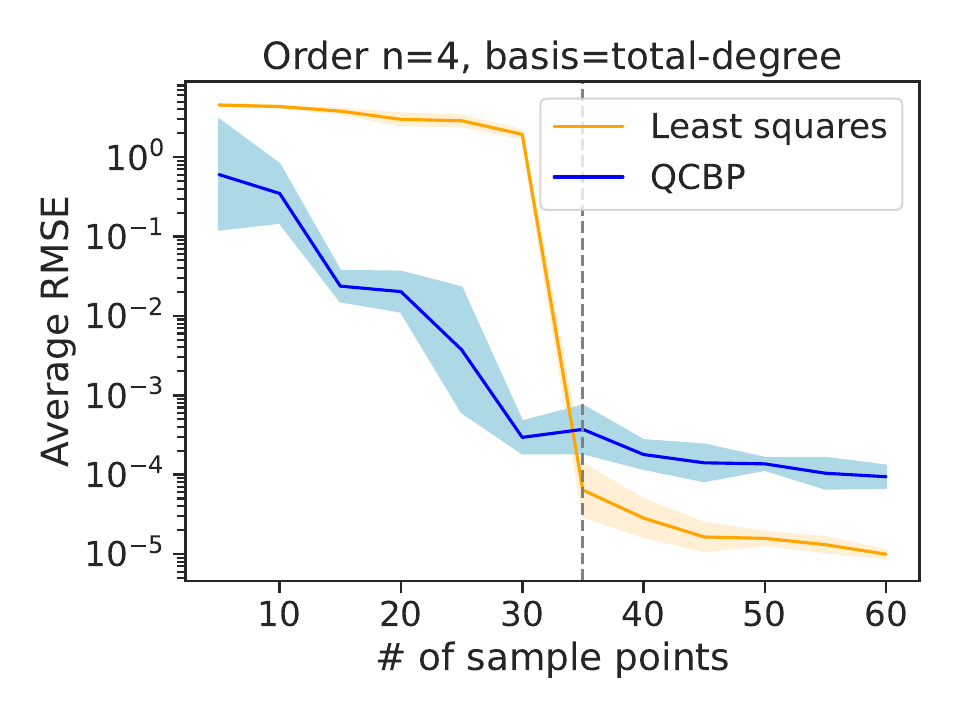}
   \includegraphics[height=5cm]{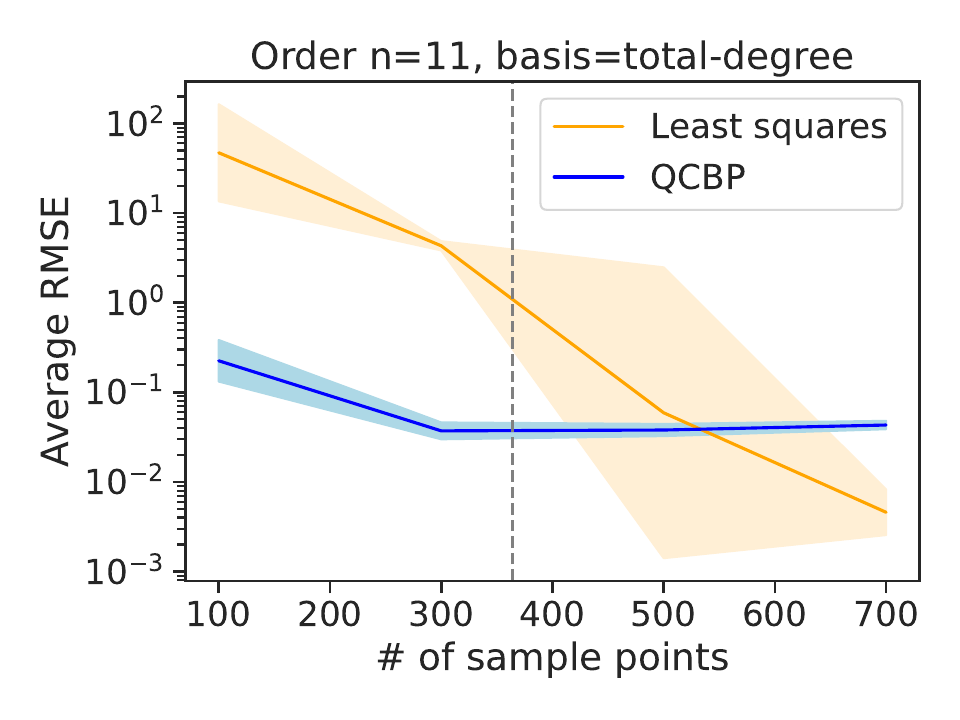}
    \caption{Left: Average RMSE on the approximation of the diffusion process over the Twitter dataset as a function of the number of samples. Right: Average RMSE on the approximation of the diffusion process over the Facebook dataset as a function of the number of samples.}
    \label{fig:Twitter}
\end{figure}

We computed the average RMSE on the approximation of $u_2$ over 5 runs, ranging the number of training sample points from 10 to 60 random sample points. The RMSE is computed over 1000 test random sample points. 
The experiments substantially confirm the trend already observed in Section \ref{subsec:PoC} of greater performance of least squares optimization, compared to the ones of QCBP, when the problem is overdetermined, and the ability of compressed sensing to recover good approximation in the underdetermined regime.

\subsubsection{The Facebook dataset}
\noindent This dataset consists of ``circles'' (or ``friends lists'') from Facebook. Facebook data was collected from survey participants using a specific Facebook app \cite{snapnets}\cite{leskovec2012learning}.
The size of this network is one order of magnitude larger than Twitter's. The number of nodes of the graph is $|V|=4,039$ and the number of edges is $|E| = 88,234$. We detected 2 communities employing the fluid community detection algorithm, which contain, respectively, 887 and 3152 nodes. 

We computed the average RMSE on the approximation of $u_2$ over 5 runs, ranging the number of training sample points from 100 to 700 random sample points. The RMSE is computed over 1000 test random sample points. Although performances in terms of RMSE loss are worse, due to the greater size of the graph, the qualitative observations made in Section~\ref{subsec:twitter} remain valid for the Facebook network.

\subsection{\rev{Computational cost analysis}}
\label{subsec:comp_cost}

\rev{In this section we conduct an experimental analysis of the computational cost of the proposed surrogate modeling strategy in order to showcase its applicability. Here we consider the time-and-edge-dependent problem of Section~\ref{subsec:time-dep_exp} and the real-world graphs of Section~\ref{subsec:real-life}. We denote as $t_{\mathrm{full}}$ the full order model time, i.e., the time needed to collect one sample by solving a diffusion equation. The polynomial surrogate has an offline and an online cost. The offline cost can be split into two contributions: namely, the cost of collecting $m$ samples---given by $m \cdot t_{\mathrm{full}}$---and the time $t_{\mathrm{coeff}}(\Lambda, m)$ needed to compute the polynomial coefficients. Finally, we denote  as $t_{\mathrm{online}}(\Lambda)$ the online time for the surrogate model, i.e., the time needed to evaluate the compute polynomial at one point.
In Figure \ref{fig:comp_cost_plots} (left) we plot the average times  of the offline and online cost of both surrogate and full order models (i.e., $t_{\mathrm{full}}$, $t_{\mathrm{coeff}}(\Lambda, m)$, and $t_{\mathrm{online}}(\Lambda)$) as a function of the number of samples $m$. Geometric means are computed over different random seeds: namely, 20 different seeds for the time-and-edge-dependent experiments on SBM, and 5 seeds for the edge-dependent case on real-world datasets.}
\begin{figure}[ht!]
    \centering
    \includegraphics[width=0.85\linewidth]{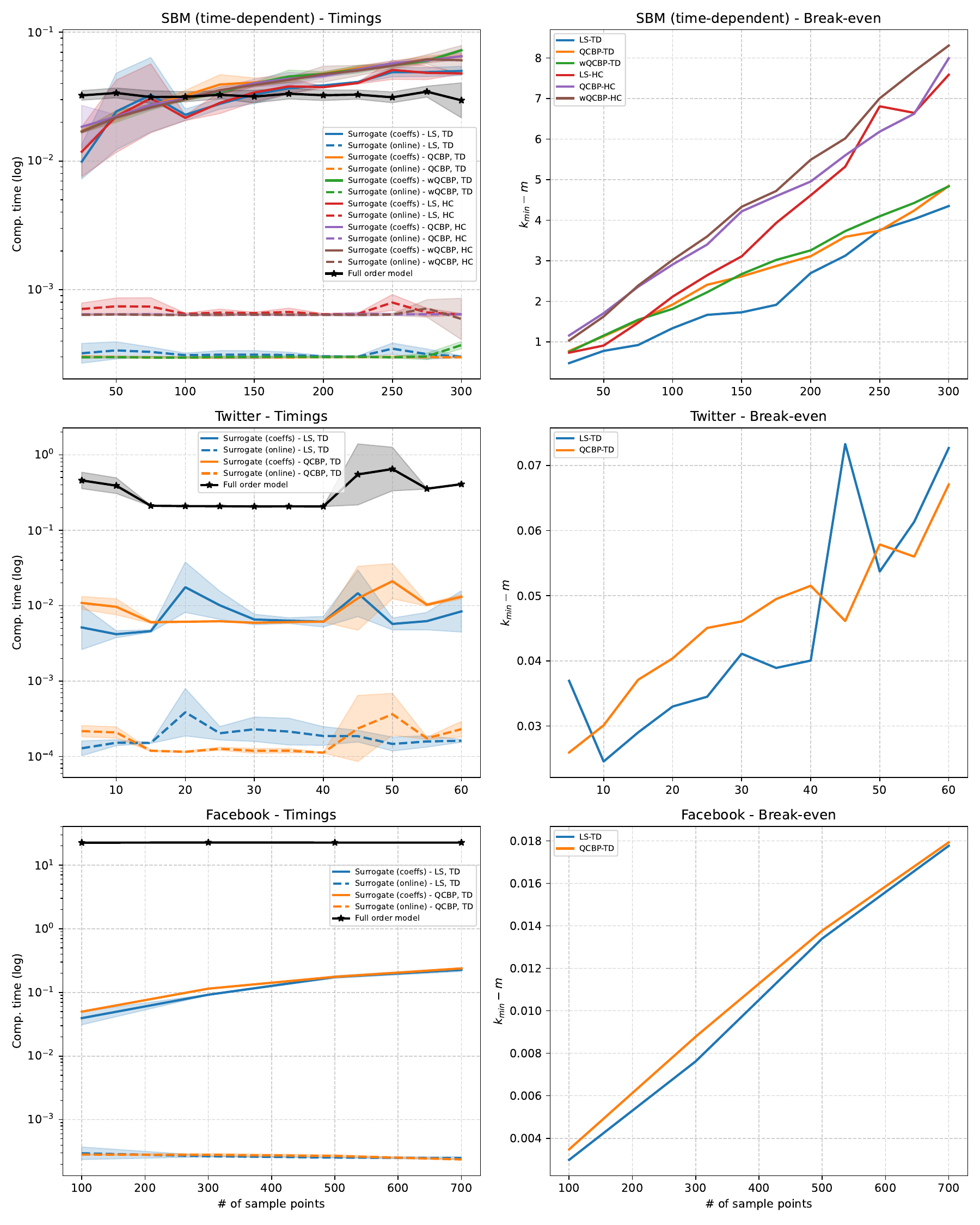}
    \caption{\rev{Left: Comparison of the average computational cost for different case studies. Solid lines refer to the cost $t_{\mathrm{coeff}}(\Lambda, m)$ of computing the coefficients of the surrogate model. The dashed lines represent the evaluation time $t_{\mathrm{online}}(\Lambda)$ of the surrogate at a single parameter vector. Solid lines with star markers refer to the evaluation time $t_{\mathrm{full}}$ of the full order model. Right: Plot of $k_{\min}-m$ (see \eqref{eq:def_kmin}) against the number of samples $m$. Top: time-and-edge-dependent diffusion on the SBM model. Center: edge-dependent diffusion on the Twitter dataset. Bottom: edge-dependent diffusion on the Facebook dataset.}}
    \label{fig:comp_cost_plots}
\end{figure}

\rev{In all cases, the online evaluation of the sparse polynomial surrogate is always faster than the computation of the full order model simulation by several order of magnitudes (i.e., $t_{\mathrm{online}}(\Lambda) \ll t_{\mathrm{full}}$). 
Moreover, while for real-world datasets the full order model evaluation of one sample  takes more time even than the offline computation of the coefficients of the surrogates (which makes the latter much better performing), for the time-dependent case the situation is different. In fact, we can see that computing the coefficients for compressed sensing surrogates with hyperbolic cross index set with order $s=20$ introduces a trade off, in terms of how many evaluations would be needed to make the computation of the surrogate worth to replace the full order model. 
For this reason,  we compute the minimum number of evaluations $k$ for which constructing the surrogate and evaluating it $k$ times is faster than evaluating the full order model $k$ times. This is defined by the inequality
$$
\underbrace{m \cdot t_{\mathrm{full}} + t_{\mathrm{coeff}}(\Lambda, m)}_{\text{surrogate construction (offline)}} + \underbrace{k \cdot t_{\mathrm{online}}(\Lambda)}_{k \text{ surrogate evaluations}} \leq \underbrace{k \cdot t_{\mathrm{full}}}_{k \text{ full order model evaluations}},
$$
which is solved by
\begin{equation}
\label{eq:def_kmin}
k \geq  \frac{m \cdot t_{\mathrm{full}} + t_{\mathrm{coeff}}(\Lambda, m)}{t_{\mathrm{full}} - t_{\mathrm{online}}(\Lambda)}=: k_{\min}.
\end{equation}
In Figure \ref{fig:comp_cost_plots} (right) we plot the difference $k_{\min}-m$ as a function of $m$. We plot $k_{\min}-m$ and not simply $k_{\min}$ to optimize visualization since in all cases, and especially for real-world datasets, we see that $k_{\min} \approx m$. Here $k_{\min}$ is averaged over multiple seeds as explained above. 
} 

\section{Conclusion}\label{sec:conclusion}

\noindent In this work, we have developed surrogate models based on sparse polynomial approximation for diffusion processes on graphs. To support the proposed methodology, we have shown convergence theorems for least squares and compressed sensing-based polynomial surrogates (see Corollaries~\ref{cor:LS} and \ref{cor:CS}, respectively), proving near-optimal algebraic rates as a function of the number of samples $m$. In Section \ref{sec:numerics}, we validated our theoretical findings by investigating the impact of single hyperparameters within the Stochastic Block Model (discussed in Section \ref{subsec:synthetic_graphs}) and showcasing the performance of least squares and compressed sensing-based sparse polynomial approximations on real-life datasets (Section \ref{subsec:real-life}).

Looking ahead, we identify several directions for future work. First, we aim to extend our numerical results for diffusion on graphs to cases involving time-dependent diffusion \rev{on further real-world graphs beyond the Twitter and Facebook datasets considered in this paper. In particular, we anticipate the benefit of the proposed surrogate modeling strategy to be most effective for very large-scale graphs.} 
Additionally, our analysis could be broadened to explore alternative parametrizations of the problem, such as focusing on initial conditions rather than just the diffusion coefficient matrix.

Beyond the diffusion equation, it would be valuable to develop polynomial-based surrogate models for more complex dynamic models on graphs. A notable example is the \emph{Kuramoto model} \cite{rodrigues2016kuramoto}, which describes how groups of oscillators tend to synchronize based on their network structure.

The insights provided by sparse polynomial surrogate models may also contribute to the characterization of deep learning surrogate models through the emerging field of \emph{practical existence theorems} \cite{adcock2022deep,adcock2025near}. The theory developed in this work could help bridge the gap in the literature on practical existence theorems relating to the approximation of functions on graphs.

Finally, the features highlighted in our work regarding time-dependent graph diffusion mechanisms within communities offer valuable insights for modeling deep learning architectures, particularly \emph{Graph Neural Networks (GNNs)} \cite{scarselli2008graph,kipf2016semi}. In recent years, GNNs have evolved based on diffusion rules \cite{chamberlain2021beltrami,chamberlain2021grand,toppingunderstanding} that generalize the standard message-passing scheme \cite{gilmer2017neural}. This evolution aims to address known challenges in the graph learning community, including oversmoothing, oversquashing, and the information bottleneck problem. Sparse polynomial approximation can be leveraged in the context of graph diffusion to design innovative GNN architectures capable of capturing more complex dynamics in community-aware scenarios.

\section*{Acknowledgments}
\noindent The authors warmly thank Jason Bramburger and Iacopo Iacopini for fruitful discussions. 
GAD acknowledges the support provided by the European Union - NextGenerationEU, in the framework of the iNEST - Interconnected Nord-Est Innovation Ecosystem (iNEST ECS00000043 – CUP G93C22000610007) project. The views and opinions expressed are solely those of the authors and do not necessarily reflect those of the European Union, nor can the European Union be held responsible for them. GAD acknowledges the support of INdAM Gruppo Nazionale of Calcolo Scientifico (GNCS). SB acknowledges the support of NSERC through grant RGPIN-2020-06766 and FRQ - Nature et Technologies through grants 313276 and 359708.
\bibliographystyle{elsarticle-num}
\bibliography{references}

\appendix
\section{Supplementary information about multi-index sets}\label{app:lower_sets}
\noindent Consider the setup of Section \ref{sec:surrogate_diffusion}. When no \emph{a priori} assumption is given on the multi-index set $S$,a  possible strategy is to choose a set $\Lambda\subseteq\N_0^d$ that contains $S$.
This could be achieved by taking the union of all subsets in $\N^d_0$ of cardinality less than or equal to $s$. However, with that sparsity condition only, the union would be equal to the whole set $\N^d_0$. Therefore, we need to consider sets $S$ with additional structure to ensure that we obtain a finite set.

\begin{defn}[Lower sets]
A multi-index set $\Lambda\subseteq\N_0^d$ is \emph{lower} if the following holds for every $ \bm{\nu} $,$ \bm{\mu} \in\N_0^d$:$
( \bm{\nu}  \in \Lambda \text { and }  \bm{\mu}  \leq  \bm{\nu} ) \Longrightarrow  \bm{\mu}  \in \Lambda,
$
where the inequality is understood componentwise.
\end{defn}
\smallskip

\noindent Intuitively, lower sets do not have any ``holes''. Note that the sets illustrated in Figure \ref{fig:indexsets} are lower sets, and for example, $S=\{(0,0), (2,0)\}$ is not a lower set. Lower sets are discussed in more detail in, e.g., \cite{sparsepoly}, but an important property is that a lower set $S$ of fixed size $s$ cannot contain multi-indices that are too far from the origin. In other words, this additional structure ensures that we obtain a finite set by taking the union of lower sets of cardinality less than or equal to $s$.

\begin{remark}\label{rem:lower_sets}
\noindent The union of lower sets with cardinality $s$ is a hyperbolic cross $\Lambda_{s-1}^{\mathrm{HC}}$ of order $s-1$ (see \cite[Proposition 2.5]{sparsepoly}). This emphasizes the connection between the hyperbolic cross index set and lower sets, and the importance of the hyperbolic cross in relation to sparse polynomial approximation, when the set $S$ is unknown.
\end{remark}
\section{Proof of Corollary \ref{cor:LS}}\label{app:proofCorLS} 
\noindent To prove Corollary \ref{cor:LS} we need an auxiliary result. Specifically, we need an adaptation of  \cite[Theorem 6.1]{adcock2022monte} to the finite-dimensional case. 

\begin{theorem}\label{th:main_ls}
    Let $0<\epsilon < 1$, $\varrho$ be either the uniform or Chebyshev measure on $\mathcal{U}=[-1,1]^d$, $m \geq 3$ and $\bm{y}_1, \dots , \bm{y}_m \sim _{\text{i.i.d}} \varrho$. Then, there exists a set $S \subseteq \mathbb{N}_0^d$ of cardinality $|S| \leq \lceil m/\log(m/\epsilon) \rceil$ such that the following holds with probability at least $1 - \epsilon$ for each fixed $f: \mathcal{U} \rightarrow \mathbb{C}$ that admits a holomorphic extension \rev{$\tilde{f}$} to a Bernstein polyellipse $\mathcal{E}_{\bm{\rho}} \subset \mathbb{C}^d$.
    For any $\bm{n} \in \mathbb{C}^m$, the approximation $\hat{f}$ to the problem \eqref{eq:ls}
    is unique and satisfies, for every $0< p < 1$, the bound $\| f - \hat{f} \|_{L_{\varrho}^2(\mathcal{U})} \leq C(\bm{\rho}, p) \rev{\cdot \|\tilde{f}\|_{L^\infty(\mathcal{E}_{\bm{\rho}})}}\cdot (m/\log (m/\epsilon))^{\frac{1}{2}-\frac{1}{p}} + 2 \cdot \|\bm{n} \|_{\infty}$.
\end{theorem}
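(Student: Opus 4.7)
The plan is to reduce the statement to the standard Monte Carlo least-squares machinery for a well-chosen lower set $S$, chosen \emph{before} the samples are drawn and independently of $f$, with $|S|$ saturating the sample budget. Setting $s := \lceil m / \log(m/\epsilon) \rceil$, I would take $S$ to be a lower set (concretely, a truncated hyperbolic cross as in \eqref{eq:HC}) of cardinality $|S| \leq s$. This choice has two virtues that drive the rest of the proof: by \cite[Theorem~3.6]{sparsepoly}, every holomorphic $f$ on $\mathcal{E}_{\bm{\rho}}$ has Chebyshev/Legendre coefficients in $\ell^p$ for all $0 < p < 1$, so the truncation error on $S$ decays as $\|f - \Pi_S f\|_{L^2_\varrho(\mathcal{U})} \leq \sigma_s(\bm{c})_2 \leq C(\bm{\rho},p)\, s^{1/2-1/p}$; and for lower sets the Christoffel-type quantity $K(S) := \sup_{\bm{y}\in\mathcal{U}} \sum_{\bm{\nu}\in S} |\psi_{\bm{\nu}}(\bm{y})|^2$ is polynomially controlled in $|S|$, which is what enables concentration of the measurement matrix.

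Next I would establish stability of $\bm{\Psi}$ via a matrix Chernoff/Bernstein inequality: for $m \gtrsim K(S) \log(|S|/\epsilon)$ one obtains $\tfrac{1}{2} \leq \sigma_{\min}(\bm{\Psi}) \leq \sigma_{\max}(\bm{\Psi}) \leq \tfrac{3}{2}$ on an event $\mathcal{A}$ of probability at least $1-\epsilon$, which in particular guarantees uniqueness of $\hat{f}$. On $\mathcal{A}$, denoting by $\bm{c}_S$ the coefficient vector of $\Pi_S f$ and using orthonormality of $\{\psi_{\bm{\nu}}\}$ so that $\|\hat{f}-\Pi_S f\|_{L^2_\varrho} = \|\hat{\bm{c}} - \bm{c}_S\|_2$, the quasi-optimality estimate
\begin{equation*}
\|\hat{\bm{c}} - \bm{c}_S\|_2 \leq 2\,\|\bm{\Psi}\bm{c}_S - \bm{b}\|_2 \leq 2\,\|f - \Pi_S f\|_{L^\infty(\mathcal{U})} + 2\,m^{-1/2}\|\bm{n}\|_2
\end{equation*}
follows from bounding the pseudoinverse by $\sigma_{\min}(\bm{\Psi})^{-1}\leq 2$. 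A triangle inequality with the truncation error, combined with $m^{-1/2}\|\bm{n}\|_2 \leq \|\bm{n}\|_\infty$ and the fact that $\|f-\Pi_S f\|_{L^\infty(\mathcal{U})}$ admits the same algebraic decay $\lesssim s^{1/2-1/p}$ under holomorphy (again via \cite[Theorem~3.6]{sparsepoly}), yields the claimed bound.

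The main obstacle will be calibrating the Christoffel condition $K(S)\log(K(S)/\epsilon) \lesssim m$ against the cardinality budget $|S| \leq m/\log(m/\epsilon)$: since $K(S)$ grows polynomially (not linearly) in $|S|$ for tensor-product Legendre and Chebyshev bases on lower sets in dimension $d$, an extra multiplicative factor $\log(|S|^\alpha) = \alpha\log|S|$ enters the sample complexity, and this is precisely the factor absorbed into the $\log(m/\epsilon)$ denominator defining $s$. Keeping the dimensional dependence honest requires the explicit lower-set Christoffel estimates from \cite[Chapter~5]{sparsepoly}; once this calibration is made, the theorem follows by a direct specialization of \cite[Theorem~6.1]{adcock2022monte} to a fixed finite-dimensional polynomial subspace with the universal constant $C(\bm{\rho},p)$ coming from the holomorphic best-approximation step.
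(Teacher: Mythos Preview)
There are two genuine gaps in your sketch. First, the inequality $\|f - \Pi_S f\|_{L^2_\varrho(\mathcal{U})} \leq \sigma_s(\bm{c})_2$ is the wrong way round: for \emph{any} fixed set $S$ with $|S|\leq s$ one has $\|f - \Pi_S f\|_{L^2_\varrho} \geq \sigma_s(\bm{c})_2$, with equality only when $S$ is a best $s$-term support of $f$. An isotropic hyperbolic cross chosen independently of $f$ and of $\bm{\rho}$ cannot deliver the best $s$-term rate uniformly over all anisotropies; the set $S$ appearing in \cite[Theorem~6.1]{adcock2022monte} is the \emph{function-dependent} best $s$-term support, which for $f$ holomorphic on $\mathcal{E}_{\bm\rho}$ is realized on a $\bm{\rho}$-dependent lower set. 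Second, your Christoffel calibration does not close in the Legendre case: on lower sets one only has $K(S) \leq |S|^2$, so the stability condition $m \gtrsim K(S)\log(|S|/\epsilon)$ forces $|S| \lesssim (m/\log)^{1/2}$ rather than $|S| \lesssim m/\log$. This is a polynomial loss, not a logarithmic one that can be absorbed into $\log(m/\epsilon)$. Your closing invocation of \cite[Theorem~6.1]{adcock2022monte} is the right endpoint, but the preceding outline does not correctly capture how that theorem achieves the stated rate.

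For comparison, the paper's proof \emph{is} that direct invocation and essentially nothing else: \cite[Theorem~6.1]{adcock2022monte} (together with \cite[Remark~5.2]{adcock2022monte}) is stated for functions of countably many variables, and the only verification needed is that when $f$ depends on $x_1,\ldots,x_d$ alone, every coefficient $c_{\bm\nu}$ with $\nu_k\neq 0$ for some $k>d$ vanishes by an elementary orthogonality computation, so the best $s$-term support automatically lies in $\mathbb{N}_0^d \times\{0\}^{\mathbb{N}} \cong \mathbb{N}_0^d$.
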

\begin{proof}
    To prove the theorem it is enough to apply \cite[Theorem 6.1]{adcock2022monte} combined with \cite[Remark 5.2]{adcock2022monte}.
    We only need to show that the support $S$ of the best $s$-term approximation to $f$ satisfies $S \subset \mathbb{N}_0^d \times \{0\}^\mathbb{N}$. Let $\mathcal{F} = \{ \bm{\nu} \in \mathbb{N}_0^{\mathbb{N} }: \| \bm{\nu}  \|_0 < \infty \} \subset  \mathbb{N}_0^{\mathbb{N} }$.
Recall that $f(\bm{x}) = f(x_1, \ldots, x_d)$. Let $\bm{\nu} \in \mathcal{F}$ such that $\nu_k \neq 0$ for some $k>d$. Then
\begin{align*}
    c_{\bm{\nu}}  
    & = \int_{\mathcal{U}} f(\bm{x}) \psi_{\bm{\nu}}(\bm{x}) d\bm{x}
    = \int_{[-1,1]^d} f(\bm{x}) \prod_{j=1}^d \psi_{\nu_j}(x_j) \int_{[-1,1]^{\mathbb{N}}} \prod_{j>d} \psi_{\nu_j}(x_j) dx\\
    & = \int_{[-1,1]^d} f(\bm{x})  \prod_{j=1}^d \psi_{\nu_j}(x_j) dx_1\cdots dx_d \cdot
    \prod_{j>d, j \neq k}\int_{-1}^{1} \psi_{\nu_j}(x_j) d x_j
    \cdot 
    \underbrace{\int_{-1}^{1} \psi_{\nu_k}(x_k) dx_k}_{=0}  
    = 0
\end{align*}
Hence, the best $s$-term approximation support $S$ can be chosen in $\mathbb{N}_0^d \times \{0\}^\mathbb{N}$. 
\end{proof}

The proof of Corollary \ref{cor:LS} comes directly by combining Theorem~\ref{th:main_ls} with Theorem~\ref{thm:mainthm} considering the following setting: 
$\mathcal{H} = \mathcal{E}_{\bm{\rho}}$, 
$\mathcal{K} = \mathcal{E}_{\bm{\rho}'}$, where $\bm{\rho}' > \bm{\rho}$ (strict inequality applies to every component)
and, consequently, $\mathcal{O} = \mathring{\mathcal{K}}$.

\end{document}